\newtheorem{definition}{Definition}[section]
\newtheorem{theorem}{Theorem}[section]
\newtheorem{proposition}{Proposition}[section]
\newtheorem{lemma}{Lemma}[section]
\newtheorem{remark}{Remark}[section]
\newcommand{\R}{\mathbb{R}}
\newcommand{\Z}{\mathbb{Z}}
\newcommand{\e}{\varepsilon}
\newcommand{\ity}{\infty}
\begin{document}
\title[Critical curve for the system with different damping mechanisms]{Critical curve for a weakly coupled system of semi-linear $\sigma$-evolution equations with different damping types}

\subjclass{35A01, 35B33, 35B44, 35L52}
\keywords{Weakly coupled system, Parabolic like damping, $\sigma$-evolution like damping, Critical curve, Lifespan}
\thanks{$^* $\textit{Corresponding author:} Tuan Anh Dao (anh.daotuan@hust.edu.vn)}

\maketitle
\centerline{\scshape Dinh Van Duong$^1$, Tuan Anh Dao$^{1,*}$, Michael Reissig$^2$}
\medskip
{\footnotesize
	\centerline{$^1$ Faculty of Mathematics and Informatics, Hanoi University of Science and Technology}
	\centerline{No.1 Dai Co Viet road, Hanoi, Vietnam}
	\centerline{$^2$ Faculty for Mathematics and Computer Science, TU Bergakademie Freiberg}
	\centerline{Pr\"{u}ferstr. 9, 09596, Freiberg, Germany}}

\begin{abstract}
In this paper, we would like to consider the Cauchy problem for a weakly coupled system of semi-linear $\sigma$-evolution equations with different damping mechanisms for any $\sigma > 1$, ``parabolic like damping'' and ``$\sigma$-evolution like damping''. Motivated strongly by the well-known Nakao's problem, the main goal of this work is to determine the critical curve between the power exponents $p$ and $q$ of nonlinear terms by not only establishing the global well-posedness property of small data solutions but also indicating blow-up in finite time weak solutions. We want to point out that the application of a modified test function associated with a judicious choice of test functions really plays an essential role to show a blow-up result for solutions and upper bound estimates for lifespan of solutions, where $\sigma$ is assumed to be any fractional real number. To end this paper, lower bound estimates for lifespan of Sobolev solutions are also shown to verify their sharp results in some spatial dimensions.
\end{abstract}

\tableofcontents

\section{Introduction}
Let us consider the following Cauchy problem for the following weakly coupled system of semi-linear $\sigma$-evolution equations with mixing two different damping types:
\begin{equation} \label{Main.Eq.1}
\begin{cases}
u_{tt} + (-\Delta)^\sigma u+ (-\Delta)^{\sigma} u_t = |v|^p, & x\in \R^n,\, t> 0, \\
v_{tt} + (-\Delta)^{\sigma} v + v_t = |u|^q, & x \in \mathbb{R}^n,\, t> 0,\\
u(0,x)= 0,\quad u_t(0,x)= u_1(x), & x\in \R^n, \\
v(0,x) = 0,\, \quad v_t(0,x) = v_1(x), & x \in \mathbb{R}^n,
\end{cases}
\end{equation}
where $\sigma > 1$ is assumed to be any real number. The parameters $p,q>1$ stand for power exponents of the nonlinear terms. The operator $(-\Delta)^{\sigma}$ is defined by
\begin{align*}
    \left((-\Delta)^{\sigma} \varphi\right)(x) := \mathfrak{F}^{-1}_{\xi \to x}\left(|\xi|^{2\sigma}\widehat{\varphi}(\xi)\right)(x),
\end{align*}
where $\widehat{\varphi}(\xi):= \mathfrak{F}_{x\rightarrow \xi}\big(\varphi(x)\big)$ stands for the Fourier transform with respect to the space variables of a function $\varphi=\varphi(x)$. Moreover, $(-\Delta)^{\sigma} u_t$ and $v_t$ together appearing in \eqref{Main.Eq.1} represent two damping terms, called visco-elastic type (or strong) damping and frictional (or external) damping, respectively. \medskip

During recent years, the following linear Cauchy problem for $\sigma$-evolution equations has gained a lot of attention from many mathematicians over the world (see, for example, \cite{Matsumura77,Shibata2000,DabbiccoReissig2014,DuongKainaneReissig2015,DabbiccoEbert2017,DaoReissig1,DaoReissig2}):
\begin{align}\label{Eq2}
    \begin{cases}
        u_{tt} +(-\Delta)^{\sigma} u + \mu(-\Delta)^{\delta} u_t =0, & x \in \mathbb{R}^n,\, t > 0,\\
        u(0,x) = u_0(x), \quad u_t(0,x) = u_1(x), & x \in \mathbb{R}^n,
    \end{cases}
\end{align}
with $\sigma \geq 1, \delta \in [0,\sigma]$ and $\mu > 0$. This family of models is quite interesting by means of choosing particular parameters $\sigma$ and $\delta$ in \eqref{Eq2}.
Among other things, one recognizes that properties of solutions to \eqref{Eq2} change completely with respect to decay estimates when $\delta$ is taken from $[0, \sigma/2)$ to $(\sigma/2, \sigma]$. From this observation, the authors in the cited papers proposed a classification between ``parabolic like models" with $\delta \in [0, \sigma/2)$, and ``$\sigma$-evolution like models" with $\delta \in (\sigma/2, \sigma]$, the so-called ``wave like models'' in the special case $\sigma = 1$.
Moreover, according to the asymptotic profile aspects of solutions to \eqref{Eq2}, the solution behavior, as $t \to \ity$, of ``parabolic like models" is similar to that of the diffusion equation
\begin{equation*}
\begin{cases}
    v_t+ \nu (-\Delta)^{\sigma- \delta}v= 0, & x\in \R^n,\, t> 0, \\
    v(0,x)= v_0(x), & x\in \R^n,
\end{cases}
\end{equation*}
where the initial data $v_0=v_0(u_0,u_1,\mu,\sigma,\delta,n)$ and a positive constant $\nu=\nu(\mu,\sigma,\delta,n)$ are appropriately chosen. Clearly, the above diffusion equation becomes the heat equation when the situation $\sigma-\delta=1$ occurs. Meanwhile, this phenomenon is no longer true for ``$\sigma$-evolution like models", i.e. some kinds of wave structure and oscillations in time appear to describe the asymptotic profile of solutions to \eqref{Eq2}. More precisely, the solution behavior, as $t \to \ity$, of ``$\sigma$-evolution like models" is related to that of the free evolution equation
\begin{equation*}
\begin{cases}
    v_{tt}+ \nu (-\Delta)^\sigma v= 0, & x\in \R^n,\, t> 0, \\
    v(0,x)= v_0(x),\quad v_t(0,x)= v_1(x), & x\in \R^n,
\end{cases}
\end{equation*}
with suitable initial data and for some constant $\nu>0$. From these above-mentioned views, we may claim that the specific case $\delta=\sigma/2$ is understood as a threshold to distinguish \eqref{Eq2} into two different families of models corresponding to  $\delta \in [0,\sigma/2)$ and $\delta \in (\sigma/2,\sigma]$.
\medskip

Next, to present our motivation in terms of studying the Cauchy problem \eqref{Main.Eq.1} let us recall some previous papers (see \cite{Wakasugi2017,ChenReissig2021,Chen2022,KitaKusaba2022,PalmieriTakamura2023}), by taking account of the following weakly coupled system for a semi-linear damped wave equation and a semi-linear wave equation in the whole space:
\begin{equation}\label{Eq3}
\begin{cases}
   u_{tt} - \Delta u + u_t = |v|^p, & x \in \mathbb{R}^n,\, t > 0,\\
   v_{tt} - \Delta v = |u|^q, & x \in \mathbb{R}^n,\, t > 0,\\
   u(0,x) = u_0(x), \quad u_t(0,x) = u_1(x), & x \in \mathbb{R}^n,\\
   v(0,x) = v_0(x), \quad v_t(0,x) = v_1(x), & x \in \mathbb{R}^n.
   \end{cases}
\end{equation}
To determine the \textit{critical curve} between the exponents $p$ and $q$ of the power nonlinearities is a part of the so-called Nakao's problem proposed by Professor Mitsuhiro Nakao (Emeritus of Kyushu University). Here, the critical curve expressed by $\Gamma(n,p,q)=0$ in the $p-q$ plane is understood as the threshold condition of a pair of exponents $(p,q)$ between global (in time) existence of small data solutions (stability of the zero solutions) if $\Gamma(n,p,q)<0$, and blow-up in finite time solutions even for small data if $\Gamma(n,p,q)>0$. More precisely, the author in \cite{Wakasugi2017} used the test function method to find out
$$ \Gamma(n,p,q)= \max\left\{\frac{q/2+1}{pq-1}-\frac{n-1}{2}, \frac{q+1}{pq-1}-\frac{n}{2}, \frac{p+1}{pq-1}-\frac{n}{2}\right\}, $$
which becomes sharp in one-dimensional case only, however, seems to be not optimal in higher dimensions $n\ge 2$. For this reason, a remarkable improvement was established in the paper \cite{ChenReissig2021} by effectively applying an iteration argument to partially fill this lack, i.e.
$$ \Gamma(n,p,q)= \max\left\{\frac{q/2+1}{pq-1}-\frac{n-1}{2}, \frac{p^{-1}+2}{pq-1}-\frac{n-1}{2}, \frac{p+1/2}{pq-1}-\frac{n}{2}\right\}. $$
Then, a further contribution coming from \cite{KitaKusaba2022} is to extend the above-mentioned results in \cite{Wakasugi2017,ChenReissig2021}, particularly, the authors in \cite{KitaKusaba2022} found out a combined result as the union of blow-up ranges obtaining from \cite{Wakasugi2017,ChenReissig2021}. Quite recently, the authors in \cite{Chen2022,PalmieriTakamura2023} considered a Nakao-type weakly coupled system with nonlinearities of derivative type instead of \eqref{Eq3}. Namely, some results for blow-up and lifespan in the subcritical case were demonstrated in \cite{Chen2022} by an iteration argument, afterwards, the authors in \cite{PalmieriTakamura2023} explored such results in the critical case based on the Zhou's method (see more \cite{{Zhou2001}}). Pay attention that in \cite{PalmieriTakamura2023} they concerned \eqref{Eq3} with a semi-linear damped Klein-Gordon equation in place of a semi-linear damped equation on the left-hand side and a pair of nonlinearities $\{|v_t|^p,|u_t|^q\}$ on the right-hand sides. Their interest is to indicate a blow-up result in finite time under suitable sign assumptions for the Cauchy data when the exponents $(p,q)$ belong to a suitable range. Among other things, one recognizes how the different structure of damped wave equations (parabolic like) and wave equations (hyperbolic like) affects on the critical curve for a weakly coupled system of their corresponding semi-linear equations. \medskip

Inspired strongly by the cited papers, in this work we would like to investigate the influence of two kind of different equations, ``parabolic like models" and ``$\sigma$-evolution like models", on their weakly coupled system \eqref{Main.Eq.1} with the usual power nonlinearities. The main purpose of this paper is to verify the following critical curve for \eqref{Main.Eq.1}:
$$ \max\left\{\frac{2q+1}{pq +q-2}, \frac{pq+p+1}{2pq-p-1} \right\} - \frac{n}{2\sigma}=0 $$
by providing both global (in time) existence of small data Sobolev solutions (Theorem \ref{Theorem1}) and blow-up in finite time result (Theorem \ref{Theorem3}). We want to stress out that the appearance of damping terms in \eqref{Main.Eq.1} gives some benefits in proving a result for global (in time) existence of Sobolev solutions, which never appeared in the cited papers to explore \eqref{Eq3}. At this point, the crux of our approach is based on the technique of using loss of decay associated with recently developed tools from Harmonic Analysis. The advantage worthy of mentioning of allowing some loss of decay is to show how the restrictions to the admissible exponents $p$ and $q$ could be relaxed. Additionally, for the second contribution of this work we are interested in reporting sharp estimates for lifespan of Sobolev solutions by demonstrating their lower bound and upper bound simultaneously when a blow-up situation of solutions occurs.

\textbf{Notations}
\begin{itemize}[leftmargin=*]
\item We write $f\lesssim g$ when there exists a constant $C>0$ such that $f\le Cg$, and $f \approx g$ when $g\lesssim f\lesssim g$.
\item  The spaces $H^a$ and $\dot{H}^a$, with $a \ge 0$, stand for Bessel and Riesz potential spaces based on $L^2$ spaces. Here $\big<D\big>^{a}$ and $|D|^{a}$ denote the differential operator with symbol $\big<\xi\big>^{a}$ and the fractional Laplace operator with symbol $|\xi|^{a}$, respectively.
\item For a given number $s \in \R$, we denote by
$$ [s]:= \max \big\{k \in \Z \,\, : \,\, k\le s \big\} \quad \text{ and }\quad [s]^+:= \max\{s,0\}, $$
its integer part and its positive part, respectively.
\item We put $\big< x\big>:= \sqrt{1+|x|^2}$, the so-called Japanese bracket of $x \in \R^n$.
\item Finally, we introduce the space
$\mathcal{D}:= (L^1 \cap L^2) \times (L^1 \cap L^2)$ with the norm
$$\|(f,g)\|_{\mathcal{D}}:= \|f\|_{L^1} + \|f\|_{L^2} + \|g\|_{L^1}+ \|g\|_{L^2}. $$

\end{itemize}

\textbf{Main results} \medskip

Let us state the global (in time) existence of small data solutions and the blow-up result, which will be proved in this paper.
\begin{theorem}[\textbf{Global existence result}]\label{Theorem1}
    Let $ 1 < \sigma < n < 2\sigma$. We assume that the exponents $p, q$ satisfy the following conditions:
    \begin{align}\label{condition1.1.1}
        \max\left\{\frac{2q+1}{pq+q-2},\frac{pq+p+1}{2pq-p-1}\right\}< \frac{n}{2\sigma}  \text{ and } p \geq 2.
    \end{align}
Then, there exists a constant $\varepsilon_0 > 0$ such that for any small data
$ (u_1,v_1) \in \mathcal{D} $
fulfilling the assumption $ \|(u_1,v_1)\|_{\mathcal{D}} < \varepsilon_0$,
we have a uniquely determined global (in time) small data Sobolev solution
\begin{align*}
    (u,v) \in \left(\mathcal{C}([0,\infty), H^{\sigma}\cap L^{\infty} \cap L^q) \cap \mathcal{C}^1([0,\infty), L^2)\right) \times \left(\mathcal{C}([0,\infty), H^{\sigma} ) \cap \mathcal{C}^1([0,\infty), L^2)\right)
\end{align*}
to \eqref{Main.Eq.1}. Moreover, the following estimates hold:
\begin{align}
\|u(t,\cdot)\|_{L^q} &\lesssim (1+t)^{1-\frac{n}{\sigma}(1-\frac{1}{q})+[\varepsilon_1(p)]^+} \|(u_1,v_1)\|_{\mathcal{D}}, \notag\\
\|u(t,\cdot)\|_{L^{\infty}} &\lesssim (1+t)^{1-\frac{n}{\sigma}+[\varepsilon_1(p)]^+} \|(u_1,v_1)\|_{\mathcal{D}}, \notag\\
\||D|^{\sigma}u(t,\cdot)\|_{L^2} &\lesssim (1+t)^{-\frac{n}{4\sigma}+[\varepsilon_1(p)]^+} \|(u_1,v_1)\|_{\mathcal{D}},\notag\\
\|u_t(t,\cdot)\|_{L^2} &\lesssim (1+t)^{-\frac{n}{4\sigma}+[\varepsilon_1(p)]^+} \|(u_1,v_1)\|_{\mathcal{D}},\notag\\
\|v(t,\cdot)\|_{L^2} &\lesssim (1+t)^{-\frac{n}{4\sigma}+[\varepsilon_2(q)]^+} \|(u_1,v_1)\|_{\mathcal{D}},\notag\\
\||D|^{\sigma}v(t,\cdot)\|_{L^2} &\lesssim (1+t)^{-\frac{n}{4\sigma}-\frac{1}{2}+[\varepsilon_2(q)]^+} \|(u_1,v_1)\|_{\mathcal{D}},\notag\\
\|v_t(t,\cdot)\|_{L^2} &\lesssim (1+t)^{-\frac{n}{2\sigma}+[\varepsilon_2(q)]^+} \|(u_1,v_1)\|_{\mathcal{D}}, \notag
\end{align}
where
$\varepsilon_1(p) :=
       1 -\frac{n}{2\sigma}(p-1)+\varepsilon$
 and
 $\varepsilon_2(q) := 1+q-\frac{n}{\sigma}(q-1)+\varepsilon$ for any small positive number $\varepsilon$. 
\end{theorem}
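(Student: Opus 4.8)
The plan is to realize the global solution as the unique fixed point of the Duhamel operator in a Banach space whose norm encodes precisely the time-weights appearing in the statement, with the fixed ``losses of decay'' $[\e(p)]^+$ on the $u$-component and $[\e(q)]^+$ on the $v$-component. The crucial point is that a \emph{single} fixed loss is used at every step, rather than a loss that accumulates along an iteration; this is what upgrades a merely local construction to a global one.

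\textbf{Linear estimates.} First I would analyze the two scalar models in Fourier space. For the strongly damped equation $u_{tt}+(-\Delta)^\sigma u+(-\Delta)^\sigma u_t=0$ with data $(0,u_1)$ the characteristic roots are $\lambda_\pm=\tfrac12\big(-|\xi|^{2\sigma}\pm\sqrt{|\xi|^{4\sigma}-4|\xi|^{2\sigma}}\big)$; these are complex for $|\xi|$ small, producing the oscillatory factor $e^{-|\xi|^{2\sigma}t/2}\,|\xi|^{-\sigma}\sin(|\xi|^\sigma t)$, and real with one branch near $-1$ for $|\xi|$ large. Splitting $\R^n_\xi$ into low- and high-frequency zones and estimating the multiplier $\widehat{E_1}(t,\xi)$ together with $|\xi|^\sigma\widehat{E_1}$ and $\partial_t\widehat{E_1}$ in each zone yields the $(L^1\cap L^2)\to L^2$ bounds for $u,\,|D|^\sigma u,\,u_t$ and the $(L^1\cap L^2)\to L^q,L^\infty$ bounds for $u$. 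The singular factor $|\xi|^{-\sigma}$ near $\xi=0$ is exactly what produces the growth $(1+t)^{1-\frac{n}{2\sigma}}$ of $\|u\|_{L^2}$ and hence the necessity of a loss of decay. For the frictionally damped equation $v_{tt}+(-\Delta)^\sigma v+v_t=0$ the low-frequency multiplier is of diffusion type $e^{-c|\xi|^{2\sigma}t}$, giving the parabolic rates $(1+t)^{-\frac{n}{4\sigma}}$, etc. Here the chain $1<\sigma<n<2\sigma$ enters in several places: $n<2\sigma$ simultaneously yields the embedding $H^\sigma\hookrightarrow L^\infty$ used below and the non-decay of the strong-damping propagator, while $n>\sigma$ secures integrability of the relevant time weights.

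\textbf{Fixed-point scheme.} By Duhamel's principle the solution satisfies
\[
u(t)=E_1(t)u_1+\int_0^t E_1(t-s)|v(s)|^p\,ds,\qquad v(t)=E_v(t)v_1+\int_0^t E_v(t-s)|u(s)|^q\,ds,
\]
with $E_1,E_v$ the linear velocity propagators of Step~1. I introduce the space $X$ of pairs $(u,v)$ for which $\|(u,v)\|_X$, the supremum over $t\ge0$ of the seven left-hand norms in the statement divided by their asserted weights, is finite, and I study the operator $N(u,v)=(\tilde u,\tilde v)$ given by the two right-hand sides. The linear contributions are bounded by $\|(u_1,v_1)\|_{\mathcal D}$ directly. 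For the nonlinearities I estimate, at each fixed $s$, the $L^1$ and $L^2$ norms of $|v(s)|^p$ and $|u(s)|^q$ by $\|v(s)\|_{L^p}^p,\|v(s)\|_{L^{2p}}^p$ and $\|u(s)\|_{L^q}^q,\|u(s)\|_{L^{2q}}^q$; the former are controlled by Gagliardo--Nirenberg interpolation between $\|v\|_{L^2}$ and $\||D|^\sigma v\|_{L^2}$ (here $p\ge2$, together with $H^\sigma\hookrightarrow L^\infty$, guarantees $p,2p\in[2,\infty]$ so the interpolation is available), while the latter are handled through the $L^q$ and $L^\infty$ norms already carried by the $u$-part of the norm. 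Feeding these into the linear estimates and integrating in $s$ reduces the whole matter to controlling time integrals of the form $\int_0^t(1+t-s)^{-a}(1+s)^{-b}\,ds$.

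\textbf{Main obstacle.} The crux is precisely this last point. Because $E_1$ grows like $(1+t)^{1-\frac{n}{2\sigma}}$ the naive integrals would diverge; for instance the slow decay $\|v(s)\|_{L^p}^p\lesssim(1+s)^{-\frac{n}{2\sigma}(p-1)}$ of the $v$-nonlinearity makes the $u$-integral contribute an extra growth factor $(1+t)^{1-\frac{n}{2\sigma}(p-1)}$ (this is the elementary estimate $\int_0^t(1+t-s)^{-a}(1+s)^{-b}\,ds\lesssim(1+t)^{-a+(1-b)}$ in the regime $a+b<1$), which is absorbed by the fixed loss $[\e(p)]^+$ with $\e(p)=1-\frac{n}{2\sigma}(p-1)+\e$; symmetrically the $u$-nonlinearity forces the loss $[\e(q)]^+$ on $v$. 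The algebraic conditions \eqref{condition1.1.1} are exactly what guarantees that these weighted integrals reproduce the prescribed weights, with the losses kept of size $O(\e)$ so that no genuine decay is destroyed. Having shown $N$ maps a small ball of $X$ into itself, I would obtain the contraction property by repeating the same estimates on the differences, bounding $\big||v|^p-|\bar v|^p\big|\lesssim\big(|v|^{p-1}+|\bar v|^{p-1}\big)|v-\bar v|$ and likewise for the $q$-nonlinearity, followed by H\"older's inequality. Banach's fixed point theorem then produces the unique global solution in $X$, and finiteness of $\|(u,v)\|_X$ is precisely the list of decay estimates claimed.
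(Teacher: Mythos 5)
Your proposal is correct and follows essentially the same route as the paper: Duhamel's formula plus Banach's fixed point theorem in a time-weighted solution space whose weights are exactly the stated decay rates with the fixed losses $[\varepsilon(p)]^+$ and $[\varepsilon(q)]^+$, the nonlinearities being estimated in $L^1\cap L^2$ via the fractional Gagliardo--Nirenberg inequality (where $p\ge 2$ and $\sigma<n<2\sigma$ enter) together with the $(L^1\cap L^2)$--$L^2$ and $L^1$--$L^{\alpha}$ estimates for the two propagators. The only inessential differences are that the paper cites the linear estimates (Propositions \ref{Proposition2.1} and \ref{Proposition2.2}) instead of rederiving them in Fourier space, and that it makes explicit the three-case analysis $p\gtrless p_{\rm crit}$, $q\gtrless q_{\rm crit}$ showing that under \eqref{condition1.1.1} at most one of the two losses is actually positive -- which is the precise bookkeeping behind your claim that the algebraic conditions make the weighted integrals reproduce the prescribed weights.
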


\begin{remark}
\fontshape{n}
\selectfont
It is obvious to see that one of two quantities $[\e_1(p)]^+$ and $[\e_2(q)]^+$ appearing in Theorem \ref{Theorem1} is non-negative due to the condition \eqref{condition1.1.1}. Comparing the achieved estimates for solutions to (\ref{Main.Eq.1}) with the corresponding ones to the linear equations \eqref{Problem1} and \eqref{Problem2} (see more Propositions \ref{Proposition2.1} and \ref{Proposition2.2}), we can understand that the estimates in Theorem \ref{Theorem1}, especially $\|v_t(t,\cdot)\|_{L^2}$, exhibit a certain loss of decay.
\end{remark}

\begin{remark}
\fontshape{n}
\selectfont
    The restrictions on the space dimension and the appearance of the condition $p \geq 2$ and $n < 2\sigma$ in Theorem \ref{Theorem1} are due to the application of the Gagliardo-Nirenberg inequality. Moreover, the condition $q \geq 2$ also arises when using this tool. However, it disappears because of the range of $(p,q)$ considered under condition (\ref{condition1.1.1}).
\end{remark}

\begin{theorem}[\textbf{Blow-up result}]\label{Theorem3}
    Let $\sigma\ge 1$ and $p,q > 1$. Assume that $u_1, v_1 \in L^1$ enjoy the conditions
    \begin{align}\label{condition1.3.1}
       \int_{\mathbb{R}^n} u_1(x) dx > 0 \text{ and } \int_{\mathbb{R}^n} v_1(x) dx > 0 .
    \end{align}
    Furthermore, we suppose the assumptions $n \leq \sigma$ or
    \begin{align}
     \displaystyle \max\left\{\frac{2q+1}{pq +q-2}, \frac{pq+p+1}{2pq-p-1} \right\} > \frac{n}{2\sigma} \text{ if } n > \sigma.\label{condition1.3.3}
     \end{align}
    Then, there is no global (in time) weak solution $(u,v) \in (\mathcal{C}([0,\infty), L^q) \cap L^q([0, \infty) \times \mathbb{R}^n)) \times \mathcal{C}([0,\infty), L^2)$ to \eqref{Main.Eq.1}. 
\end{theorem}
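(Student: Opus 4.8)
The plan is to proceed by contradiction via the test function method: assuming a global Sobolev solution $(u,v)$ exists, I will produce an inequality incompatible with the positivity conditions \eqref{condition1.3.1}. The starting point is the weak formulation. Multiplying the two equations by a nonnegative test function $\varphi=\varphi(t,x)$, integrating over $(0,\infty)\times\R^n$, transferring all derivatives onto $\varphi$ by integration by parts in $t$ and by the self-adjointness of $(-\Delta)^\sigma$, and invoking $u(0,\cdot)=v(0,\cdot)=0$, $u_t(0,\cdot)=u_1$, $v_t(0,\cdot)=v_1$, I obtain
\begin{align*}
\int_0^\infty\!\!\int_{\R^n}|v|^p\varphi\,dx\,dt+\int_{\R^n}u_1\,\varphi(0,\cdot)\,dx &= \int_0^\infty\!\!\int_{\R^n}u\big(\varphi_{tt}+(-\Delta)^\sigma\varphi-(-\Delta)^\sigma\varphi_t\big)\,dx\,dt,\\
\int_0^\infty\!\!\int_{\R^n}|u|^q\varphi\,dx\,dt+\int_{\R^n}v_1\,\varphi(0,\cdot)\,dx &= \int_0^\infty\!\!\int_{\R^n}v\big(\varphi_{tt}+(-\Delta)^\sigma\varphi-\varphi_t\big)\,dx\,dt.
\end{align*}
The strong damping $(-\Delta)^\sigma u_t$ produces the term $-(-\Delta)^\sigma\varphi_t$ (its boundary contribution drops because $u(0,\cdot)=0$), whereas the frictional damping $v_t$ produces $-\varphi_t$; this asymmetry is exactly what will separate the two damping mechanisms in the final condition.

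The crucial and most delicate ingredient is the choice of $\varphi$. Since $\sigma$ is fractional, $(-\Delta)^\sigma$ is nonlocal and a compactly supported cutoff is useless, its fractional Laplacian decaying only polynomially. I would therefore take $\varphi(t,x)=\eta_T(t)\,\phi_R(x)$, with $\eta_T(t)=\eta(t/T)$ a smooth temporal cutoff normalized by $\eta(0)=1$, and with a spatial weight $\phi_R(x)=\phi(x/R)$ drawn from a class with controlled nonlocal behaviour, typically $\phi(x)\approx\langle x\rangle^{-n-2\sigma}$ (or the appropriately optimized profile), for which one has the pointwise bound $|(-\Delta)^\sigma\phi_R(x)|\lesssim R^{-2\sigma}\langle x/R\rangle^{-n-2\sigma}$ together with the Hölder-type estimate $\int_{\R^n}|(-\Delta)^\sigma\phi_R|^{r}\phi_R^{1-r}\,dx\lesssim R^{n-2\sigma r}$. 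This modified test function is precisely the ``judicious choice'' advertised in the abstract, and is what makes the scheme go through for non-integer $\sigma$. Since $\varphi$ is localized, the nonlinear integrals are finite for the (locally integrable) nonlinearities of a weak solution. Applying Hölder's inequality to the right-hand sides — pairing $u$ with exponent $q$ and $v$ with exponent $p$ — and noting that $\int_{\R^n}u_1\phi_R\,dx\to\int_{\R^n}u_1\,dx>0$ and $\int_{\R^n}v_1\phi_R\,dx\to\int_{\R^n}v_1\,dx>0$ as $R\to\infty$, I arrive at a coupled pair
$$I_v+c_1\lesssim I_u^{1/q}\,R^{\beta_1},\qquad I_u+c_2\lesssim I_v^{1/p}\,R^{\beta_2},$$
where $I_u=\int\!\!\int|u|^q\varphi$, $I_v=\int\!\!\int|v|^p\varphi$, the constants $c_1,c_2>0$ come from the data, and the exponents $\beta_1,\beta_2$ depend on the chosen scaling relation $T\approx R^\theta$.

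Eliminating one functional and comparing with the positive constants $c_1,c_2$ then forces a contradiction as $R\to\infty$ once the relevant combination of $\beta_1,\beta_2$ is nonpositive (concretely, once $q\beta_1+\beta_2\le 0$ or $\beta_1+p\beta_2\le 0$). The two members of the maximum in \eqref{condition1.3.3} originate from the two different damping structures: the strongly damped $u$-equation is $\sigma$-evolution-like with natural time scale $T\approx R^{\sigma}$, while the frictionally damped $v$-equation is parabolic-like with $T\approx R^{2\sigma}$; optimizing $\theta$ between these regimes and using the sharp modified weight rather than a crude polynomial cutoff (which yields only weaker exponents) is designed to produce exactly the two thresholds $\frac{pq+p+1}{2pq-p-1}$ and $\frac{2q+1}{pq+q-2}$, and blow-up follows whenever either is at least $\frac{n}{2\sigma}$. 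I expect the main obstacle to be precisely this calibration: establishing the nonlocal estimates on $(-\Delta)^\sigma\phi_R$ sharply enough that no decay is wasted and the correct exponents emerge, and then handling the critical case of equality in \eqref{condition1.3.3}, where the comparison is only borderline and a refinement is needed — for instance a slicing argument in $T$ or a logarithmic sharpening of the test function. The degenerate low-dimensional range $n\le\sigma$ should be treated separately, where the scaling renders the obstruction unconditional.
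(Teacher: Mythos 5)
Your overall framework (weak formulation, decaying spatial weight adapted to the nonlocal operator, H\"older, elimination, contradiction with the positive data integrals) is the right family of methods, but the core of your plan cannot reach the stated thresholds. You test \emph{both} equations against one function $\eta_T(t)\phi_R(x)$ tied by a single scaling $T\approx R^{\theta}$, and propose to optimize $\theta$. Dimensional analysis shows this is provably insufficient: your exponents are $\beta_1=-\min\{2\theta,2\sigma\}+\frac{\theta+n}{q'}$ and $\beta_2=-\min\{\theta,2\sigma\}+\frac{\theta+n}{p'}$, and optimizing your first condition $\beta_1+p\beta_2\le 0$ over all $\theta$ gives at best (at $\theta=2\sigma$) the requirement $\frac{q+1}{pq-1}\ge\frac{n}{2\sigma}$, while optimizing $q\beta_1+\beta_2\le 0$ gives at best (at $\theta=\sigma$) the requirement $\frac{pq+p+1}{2(pq-1)}\ge\frac{n}{2\sigma}$. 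These are strictly weaker than the theorem's thresholds $\frac{2q+1}{pq+q-2}$ and $\frac{pq+p+1}{2pq-p-1}$: the second for every $p>1$ (since $2(pq-1)>2pq-p-1$), and the first whenever $q^2(p-1)>q-1$, in particular for all $p\ge 2$. So there is a nonempty region where blow-up is claimed but your inequalities yield no contradiction; no choice of weight profile can repair this, because the powers of $R$ are fixed by scaling alone. The paper's key device, which a single $\theta$ cannot emulate, is to use \emph{two} test functions $\Psi_{j,R}(t,x)=\eta(R^{-2\sigma}t)\,\varphi(R^{-j}K_j^{-j}x)$, $j=1,2$: a \emph{common} time scale $R^{2\sigma}$ but \emph{different} spatial scales, $R$ (parabolic scaling) for the frictionally damped $v$-equation and $R^{2}$ ($\sigma$-evolution scaling) for the strongly damped $u$-equation, so that each equation is tested at its own natural scaling simultaneously.

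This choice creates a difficulty your plan never faces and therefore never addresses: after H\"older, the $u$-equation inequality involves $J_{2,R}=\int\int|u|^q\Psi_{2,R}\,dx\,dt$, while the $v$-equation inequality has $J_{1,R}=\int\int|u|^q\Psi_{1,R}\,dx\,dt$ on its left-hand side; these carry different spatial weights and cannot be chained directly. The paper closes the loop with the comparison $J_{1,R}\le J_{2,R}\le 2J_{1,R}$ for all $R\ge R_0$, proved by dominated convergence from the hypothesis that $u$ is a \emph{global} solution in $\mathcal{C}([0,\infty),L^q(\R^n))$ — this is precisely where the Sobolev-solution class in the statement is used. Two further gaps. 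First, your ``most delicate ingredient'' is stated incorrectly for fractional $\sigma>1$: the bound $|(-\Delta)^\sigma\phi_R|\lesssim R^{-2\sigma}\langle x/R\rangle^{-n-2\sigma}$ for $\phi=\langle x\rangle^{-n-2\sigma}$ is false; by Lemma \ref{lemma4.1} the fractional Laplacian of $\langle x\rangle^{-r}$ decays only like $\langle x\rangle^{-n-2\overline{\sigma}}$ with $\overline{\sigma}=\sigma-[\sigma]$, which is why the paper takes the weight $\varphi=\langle x\rangle^{-n-2\overline{\sigma}}$ itself, ensuring $|(-\Delta)^\sigma\varphi|\lesssim\varphi$ and hence integrability of the H\"older quotient. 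Second, in the critical case $\Gamma_{\rm c}(p,q)=0$ your suggested ``slicing in $T$ or logarithmic sharpening'' is not the mechanism used: the paper exploits the auxiliary parameters $K_1,K_2$ built into the test functions from the start — on the critical curve the $R$-power is exactly zero, the surviving bound is a negative power of $K_j$, and after showing the relevant limit functional exists one sends $K_j\to\infty$ to force $u\equiv 0$, contradicting \eqref{condition1.3.1}. Without the two-scale test functions, the comparison lemma, and the $K_j$-mechanism, the proof does not go through.
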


\begin{remark}
	\fontshape{n}
	\selectfont
From the conditions \eqref{condition1.1.1} and \eqref{condition1.3.3} in Theorems \ref{Theorem1} and \ref{Theorem3}, respectively, we claim that the critical curve for \eqref{Main.Eq.1} in the $p-q$ plane is precisely described by
$$ \max\left\{\frac{2q+1}{pq +q-2}, \frac{pq+p+1}{2pq-p-1} \right\} -\frac{n}{2\sigma}= 0. $$
\end{remark}

\begin{remark}
	\fontshape{n}
	\selectfont
For the purpose of observing more explicitly, let us illustrate some ranges describing results for both global existence from Theorem \ref{Theorem1} and blow-up from Theorem \ref{Theorem3} in the $p-q$ plane in the following figure:
\begin{figure}[H]
\begin{center}
\begin{tikzpicture}[>=latex,xscale=1.0,scale=0.9]

\draw[->] (0,0) -- (7.5,0)node[below]{$p$};
\draw[->] (0,0) -- (0,7.5)node[left]{$q$};
\node[below left] at(0,0){$0$};
\node[below] at (2.5,1.0) {};


\draw[fill] (1,0) circle[radius=1pt];
\node[below] at (1,0){{\scriptsize $1$}};
\draw[dashed] (1,0)--(1,5);

\draw[fill] (2,0) circle[radius=1pt];
\node[below] at (2,0){{\scriptsize $2$}};
\draw[dashed] (2,0)--(2,5);

\draw[fill] (2.7,0) circle[radius=1pt];
\node[below] at (2.7,0){{\scriptsize $p_{\rm crit}$}};
\draw[dashed] (2.7,0)--(2.7,5);

\draw[fill] (0,1) circle[radius=1pt];
\node[left] at (0,1){{\scriptsize $1$}};
\draw[dashed] (0,1)--(7,1);

\draw[fill] (0,2) circle[radius=1pt];
\node[left] at (0,2){{\scriptsize $2$}};
\draw[fill] (0,3) circle[radius=1pt];
\node[left] at (0,3){{\scriptsize $q_{\rm crit}$}};
\draw[fill] (0,2.3) circle[radius=1pt];
\node[left] at (0,2.3){{\scriptsize $q_0$}};
\draw[fill] (1.3,0) circle[radius=1pt];
\node[below] at (1.3,0){{\scriptsize $p_0$}};

\fill[color=black!10!white] (1,7)--(1,1)--(2.7,1)--(2.7,3)--(2.2,3.95628)--(2,4.7213)--(1.72,7.01637)--cycle;

\fill[color=black!10!white] (2.7,3)--(2.7,1)--(7,1)--(7,2.5)--(4,2.734)--cycle;

\fill[color=black!25!white] (2,7)--(2,4.7213)--(2.2,3.95628)--(2.5,3.28688)--(2.7,3)--(7,3)--(7,7)--cycle;

\fill[color=black!25!white] (2.7,3)--(4,2.734)--(7,2.5)--(7,3)--cycle;

\draw[domain = 1.72:2.7,blue,line width=1.0pt]plot(\x,{(1.2787*(\x)+0.74751)/((\x)-1.3)});
\draw[domain = 2.7:6, dashed, blue,line width=1.0pt]plot(\x,{(1.2787*(\x)+0.74751)/((\x)-1.3)});

\draw[domain = 2.7:7,red,line width=1.0pt]plot(\x,{(0.15+2.3*(\x))/((\x)-0.58)});
\draw[domain = 1.5:2.7,red, dashed, line width=1.0pt]plot(\x,{(0.15+2.3*(\x))/((\x)-0.58)});

\draw[fill] (2.7,3) circle[radius=1.5pt];
\draw[dashed] (1,0)--(1,7);
\draw[dashed] (0,3)--(7,3);
\draw[dashed] (0,1)--(7,1);
\draw[thin] (2,4.7213)--(2,7);

\draw[thin] (1,1)--(1,7);
\draw[thin] (1,1)--(7,1);
\draw[dashed] (2,0)--(2,7);

\draw[dashed] (0,2)--(7,2);
\draw[dashed] (2.7,0)--(2.7,7);
\draw[dashed] (0,3)--(2.7,3);

\draw[thin] (0,0)--(0,7.3);
\draw[thin] (0,0)--(7.3,0);
\draw[dashed] (0,2.3)--(7,2.3);
\draw[dashed] (1.3,0)--(1.3,7);

\fill[color=black!10!white] (9.25,3.75)--(8.75,3.75)--(8.75,3.25)--(9.25,3.25)--cycle;
\node[right] at (9.4,3.5) {{\footnotesize \text{Blow-up}}};

\fill[color=black!25!white] (9.25,2.75)--(8.75,2.75)--(8.75,2.25)--(9.25,2.25)--cycle;
\node[right] at (9.4,2.5) {{\footnotesize \text{Global existence}}};

\draw[thin, color=blue,line width=1.0pt] (9.25,4.75)--(8.75,4.75);
\node[right] at (9.5,4.75) {$\dfrac{2q+1}{pq +q-2} = \dfrac{n}{2\sigma}$};

\draw[thin, color=red,line width=1.0pt] (9.25,5.75)--(8.75,5.75);
\node[right] at (9.5,5.75) {$\dfrac{pq+p+1}{2pq-p-1} = \dfrac{n}{2\sigma}$};
\end{tikzpicture}
\caption{Global existence and blow-up results in the $p-q$ plane when $\frac{4\sigma}{3} < n < 2\sigma$.}
\label{fig.zone1}
\end{center}
\end{figure}
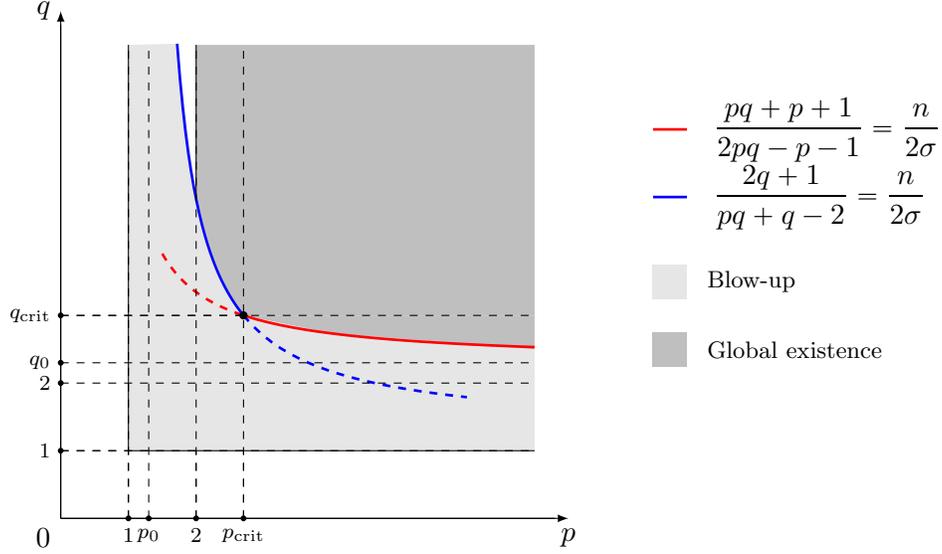
\end{remark}

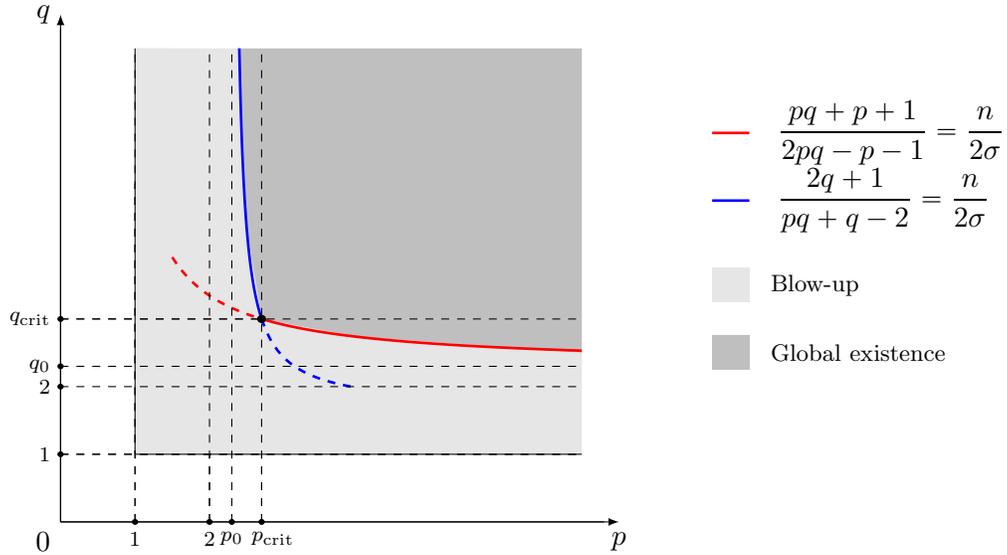
\begin{figure}[H]
\begin{center}
\begin{tikzpicture}[>=latex,xscale=1.1,scale=0.9]

\draw[->] (0,0) -- (7.5,0)node[below]{$p$};
\draw[->] (0,0) -- (0,7.5)node[left]{$q$};
\node[below left] at(0,0){$0$};
\node[below] at (2.5,1.0) {};


\draw[fill] (1,0) circle[radius=1pt];
\node[below] at (1,0){{\scriptsize $1$}};
\draw[dashed] (1,0)--(1,5);

\draw[fill] (2,0) circle[radius=1pt];
\node[below] at (2,0){{\scriptsize $2$}};
\draw[dashed] (2,0)--(2,5);

\draw[fill] (2.7,0) circle[radius=1pt];
\node[below] at (2.85,0){{\scriptsize $p_{\rm crit}$}};
\draw[dashed] (2.7,0)--(2.7,5);

\draw[fill] (0,1) circle[radius=1pt];
\node[left] at (0,1){{\scriptsize $1$}};
\draw[dashed] (0,1)--(7,1);

\draw[fill] (0,2) circle[radius=1pt];
\node[left] at (0,2){{\scriptsize $2$}};
\draw[fill] (0,3) circle[radius=1pt];
\node[left] at (0,3){{\scriptsize $q_{\rm crit}$}};
\draw[fill] (0,2.3) circle[radius=1pt];
\node[left] at (0,2.3){{\scriptsize $q_0$}};
\draw[fill] (2.3,0) circle[radius=1pt];
\node[below] at (2.3,0){{\scriptsize $p_0$}};

\fill[color=black!10!white] (1,7)--(1,1)--(2.7,1)--(2.7,3)--(2,4.7213)--(1.5,7)--cycle;

\fill[color=black!10!white] (1,7)--(2.4,7)--(2.5,4.33333)--(2.7,3)--(1,3)--cycle;

\fill[color=black!10!white] (2.7,3)--(2.7,1)--(7,1)--(7,2.5)--(4,2.734)--cycle;

\fill[color=black!25!white] (2.7,3)--(4,2.734)--(7,2.5)--(7,3)--cycle;

\fill[color=black!25!white] (1,7)--(2.4,7)--(2.5,4.33333)--(2.7,3)--(7,3)--(7,7)--cycle;

\draw[domain = 2.4:2.7, blue,line width=1.0pt]plot(\x,{(1.666667*(\x)-3.3)/((\x)-2.3)});

\draw[domain = 2.7:4, dashed, blue,line width=1.0pt]plot(\x,{(1.666667*(\x)-3.3)/((\x)-2.3)});

\draw[domain = 2.7:7,red,line width=1.0pt]plot(\x,{(0.15+2.3*(\x))/((\x)-0.58)});
\draw[domain = 1.5:2.7,red, dashed, line width=1.0pt]plot(\x,{(0.15+2.3*(\x))/((\x)-0.58)});

\draw[fill] (2.7,3) circle[radius=1.5pt];
\draw[dashed] (1,0)--(1,7);
\draw[dashed] (0,3)--(7,3);
\draw[dashed] (0,1)--(7,1);

\draw[thin] (1,1)--(1,7);
\draw[thin] (1,1)--(7,1);
\draw[dashed] (2,0)--(2,7);

\draw[dashed] (0,2)--(7,2);
\draw[dashed] (2.7,0)--(2.7,7);
\draw[dashed] (0,3)--(2.7,3);

\draw[thin] (0,0)--(0,7.3);
\draw[thin] (0,0)--(7.3,0);
\draw[dashed] (0,2.3)--(7,2.3);
\draw[dashed] (2.3,0)--(2.3,7);

\fill[color=black!10!white] (9.25,3.75)--(8.75,3.75)--(8.75,3.25)--(9.25,3.25)--cycle;
\node[right] at (9.4,3.5) {{\footnotesize \text{Blow-up}}};

\fill[color=black!25!white] (9.25,2.75)--(8.75,2.75)--(8.75,2.25)--(9.25,2.25)--cycle;
\node[right] at (9.4,2.5) {{\footnotesize \text{Global existence}}};

\draw[thin, color=blue,line width=1.0pt] (9.25,4.75)--(8.75,4.75);
\node[right] at (9.5,4.75) {$\dfrac{2q+1}{pq +q-2} = \dfrac{n}{2\sigma}$};

\draw[thin, color=red,line width=1.0pt] (9.25,5.75)--(8.75,5.75);
\node[right] at (9.5,5.75) {$\dfrac{pq+p+1}{2pq-p-1} = \dfrac{n}{2\sigma}$};
\end{tikzpicture}
\caption{Global existence and blow-up results in the $p-q$ plane when $\sigma < n \leq \frac{4\sigma}{3}$.}
\label{fig.zone2}
\end{center}
\end{figure}
Here $p_{\rm crit}:= 1+\displaystyle\frac{2\sigma}{n}$, $q_{\rm crit}:= 1+\displaystyle\frac{2\sigma}{n-\sigma}$, moreover, $p = p_0 := -1+\displaystyle\frac{4\sigma}{n}$, $q = q_0 := \displaystyle\frac{n+2\sigma}{2(n-\sigma)}$ are asymptotic lines corresponding to the curves $$\displaystyle\frac{2q+1}{pq+q-2}=\frac{n}{2\sigma},\quad \text{ i.e. }\quad q = \displaystyle\frac{2(n+\sigma)}{n(p+1)-4\sigma}$$ and $$\displaystyle\frac{pq+p+1}{2pq-p-1}=\frac{n}{2\sigma},\quad \text{ i.e. }\quad q = \displaystyle\frac{(p+1)(n+2\sigma)}{2p(n-\sigma)},$$ respectively. We note that
\begin{align}\label{relation10}
   \displaystyle\frac{(p+1)(n+2\sigma)}{2p(n-\sigma)} \geq \displaystyle\frac{2(n+\sigma)}{n(p+1)-4\sigma} \quad\text{ if and only if }\quad p \geq p_{\rm crit} = 1 + \frac{2\sigma}{n}.
\end{align}

\textbf{The remaining part of this paper is organized as follows:} In Section \ref{section3}, we are going to recall known results in previous studies on the Cauchy problems for the corresponding linear equations included in the system (\ref{Main.Eq.1}) to prove global (in time) existence of small data solutions to (\ref{Main.Eq.1}). Then, we will present the proof of the blow-up result for finite (in time) Sobolev solutions in Section \ref{section4}. Finally, Section \ref{Sect.Final} not only devotes to some estimates for lifespan of solutions but also leaves an open problem involving (\ref{Main.Eq.1}).

\section{Global (in time) existence of small data solutions}\label{section3}
\subsection{Auxiliary estimates for solution to the linear equations}\label{section2}
In this section, we will recall useful estimates in previous papers. For this purpose, let us turn to the following linear Cauchy problem:
\begin{equation}\label{Problem1}
    \begin{cases}
        u_{tt} + (-\Delta)^{\sigma} u_t + (-\Delta)^{\sigma} u = 0, &\text{ } x \in \mathbb{R}^n, t > 0,\\
        u(0,x) = 0, \quad u_t(0,x) = u_1(x), &\text{ } x \in \mathbb{R}^n,
    \end{cases}
\end{equation}
where $\sigma > 0$. The solution of the problem (\ref{Problem1}) is given by
\begin{align}
     u^{\rm lin}(t,x) :=  \mathfrak{F}^{-1}_{\xi \to x}\left(\widehat{\mathcal{K}_{1}}(t,\xi)  \widehat{u_1}(\xi)\right), \label{solution1}
\end{align}
where
\begin{align*}
    \widehat{\mathcal{K}_1}(t,\xi) = \mathfrak{F}_{x \to \xi}(\mathcal{K}_1(t,x)) =  \frac{e^{\lambda_{11}(\xi) t}-e^{\lambda_{12}(\xi) t}}{\lambda_{11}(\xi)-\lambda_{12}(\xi)},
\end{align*}
in which the characteristic roots are determined as follows:
\begin{align*}
    \lambda_{11, 12} :=
    \begin{cases}
        \displaystyle\frac{1}{2} \left(-|\xi|^{2\sigma} \pm |\xi|^{\sigma}\sqrt{|\xi|^{2\sigma}-4}\right) &\text{ if } |\xi| \geq 2^{\frac{1}{\sigma}}, \\
        \\
        \displaystyle\frac{1}{2} \left(-|\xi|^{2\sigma} \pm i|\xi|^{\sigma}\sqrt{4-|\xi|^{2\sigma}}\right) &\text{ if } |\xi| < 2^{\frac{1}{\sigma}}.
    \end{cases}
\end{align*}
\begin{proposition}[see Theorems 2.1 and 9.1 in \cite{DabbiccoEbert2021}] \label{Proposition2.1} Let $\sigma>0$ and $\sigma\ne 1$. Assuming that $m \in [1,2]$ and $u_1 \in L^m \cap L^2$. Then, the following energy estimate for solutions to \eqref{Problem1} holds:
\begin{align*}
    \|\big(|D|^{\sigma},\partial_t\big) u(t,\cdot)\|_{L^2} &\lesssim (1+t)^{-\frac{n}{2\sigma}(\frac{1}{m}-\frac{1}{2})} \|u_1\|_{L^m \cap L^2}.
\end{align*}
Furthermore, if we suppose that the following conditions are provided for $1 \leq \alpha_1 \leq m \leq \alpha_2 \leq \infty$:
$$ n\left(\frac{1}{\alpha_1}-\frac{1}{\alpha_2}\right)+n\sigma \max\left\{\frac{1}{2}-\frac{1}{\alpha_1}, \frac{1}{\alpha_2}-\frac{1}{2}\right\} < \sigma $$
and
$$ \frac{1}{2} \le \frac{1}{m}-\frac{1}{\alpha_2} < \frac{2\sigma}{n}, $$
then we conclude the estimate for the solution itself as follows:
\begin{align*}
    \|u(t,\cdot)\|_{L^{\alpha_2}} \lesssim (1+t)^{-\frac{n}{\sigma}(\frac{1}{\alpha_1}-\frac{1}{\alpha_2})+1}\|u_1\|_{L^{\alpha_1}} + e^{-ct} \|u_1\|_{L^m},
\end{align*}
where $c$ is a suitable positive constant.
\end{proposition}
\begin{remark}\label{remark2.1}
\fontshape{n}
\selectfont
    To prove Theorem \ref{Theorem1}, we need to use some estimates by choosing the parameters $m =1$, $\alpha_1 = 1$ and $\alpha_2 \in \{q,\infty\}$ with $\sigma > 1$. It is clear to verify that from the assumptions of Theorem \ref{Theorem1} the conditions appearing in Proposition \ref{Proposition2.1} are satisfied.
\end{remark}

\begin{remark}\label{remark2.2}
\fontshape{n}
\selectfont
    This remark gives some explanation for the restriction $\sigma\ne 1$ in Proposition \ref{Proposition2.1}. Actually, to establish the desired estimates in this proposition, the authors in \cite{DabbiccoEbert2021} have relied on the method of estimating Fourier multipliers appearing in the representation formula of fundamental solutions to \eqref{Problem1}. Unfortunately, this approach fails in the treatment of the case $\sigma=1$ because several properties of hyperbolic operators are missing in the case of so-called strictly hyperbolic operators, i.e. in the case $\sigma=1$. Additionally, another reason comes from the application of Littman's lemma, based on stationary phase method, which only works so well if the non-singular behavior of the Hessian matrix is guaranteed. However, the Hessian matrix is just singular in the case $\sigma=1$. Concerning this case we refer the readers to Section \ref{Finalcomments}.
\end{remark}
Next, we consider the following linear Cauchy problem:
\begin{equation}\label{Problem2}
    \begin{cases}
        v_{tt}+ (-\Delta)^{\sigma} v + v_t = 0, & x \in \mathbb{R}^n, t > 0,\\
        v(0,x) = 0, \quad v_t(0,x) = v_1(x), & x \in \mathbb{R}^n,
    \end{cases}
\end{equation}
where $\sigma \geq 1$. The solution of the problem (\ref{Problem2}) is expressed by
\begin{align}
    v^{\rm lin}(t,x) := \mathfrak{F}^{-1}_{\xi \to x}\left(\widehat{\mathcal{K}_{2}}(t,\xi)  \widehat{v_1}(\xi)\right), \label{solution2}
\end{align}
where
\begin{align*}
    \widehat{\mathcal{K}_2}(t,\xi) = \mathfrak{F}_{x \to \xi}(\mathcal{K}_2(t,x)) =\frac{e^{\lambda_{21}(\xi) t}-e^{\lambda_{22}(\xi) t}}{\lambda_{21}(\xi)-\lambda_{22}(\xi)},
\end{align*}
in which the characteristic roots are determined as follows:
\begin{align*}
    \lambda_{21,22} :=
    \begin{cases}
        -\displaystyle\frac{1}{2} \pm \frac{1}{2} \displaystyle\sqrt{1-4|\xi|^{2\sigma}} &\text{ if } |\xi| < 2^{-\frac{1}{\sigma}},\\
       \\
       -\displaystyle\frac{1}{2} \pm \frac{i}{2} \displaystyle\sqrt{4|\xi|^{2\sigma}-1} &\text{ if } |\xi| \geq 2^{-\frac{1}{\sigma}}.
    \end{cases}
\end{align*}

\begin{proposition}[see Proposition 2.1 in \cite{PhamReissig2017}]\label{Proposition2.2}
   Let $\sigma\ge 1$. The Sobolev solutions to \eqref{Problem2} satisfy the $(L^1 \cap L^2)-L^2$ estimates
\begin{align*}
\|v(t,\cdot)\|_{L^2} &\lesssim  (1+t)^{-\frac{n}{4\sigma}}\|v_1\|_{L^1 \cap H^{-{\sigma}}}, \\
\big\||D|^{\sigma} v(t,\cdot)\big\|_{L^2} &\lesssim  (1+t)^{-\frac{n}{4\sigma}- \frac{1}{2}}\|v_1\|_{L^1 \cap L^2}, \\
\|v_t(t,\cdot)\|_{L^2} &\lesssim (1+t)^{-\frac{n}{4\sigma}-1}\|v_1\|_{L^1 \cap L^2},
\end{align*}
and the $L^2-L^2$ estimates
\begin{align*}
\|v(t,\cdot)\|_{L^2} &\lesssim (1+t)\|v_1\|_{L^2}, \\
\big\||D|^{\sigma} v(t,\cdot)\big\|_{L^2} &\lesssim  (1+t)^{-\frac{1}{2}}\|v_1\|_{L^2}, \\
\|v_t(t,\cdot)\|_{L^2} &\lesssim \|v_1\|_{L^2},
\end{align*}
for all space dimensions $n \ge 1$.
\end{proposition}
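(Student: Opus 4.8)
The plan is to start from the explicit Fourier representation \eqref{solution2} and Plancherel's theorem, which reduce every one of the six estimates to an $L^2$-bound on a multiplier applied to $\widehat{v_1}$. Writing the three target quantities as the multipliers $\widehat{\mathcal{K}_2}(t,\xi)$, $|\xi|^{\sigma}\widehat{\mathcal{K}_2}(t,\xi)$ and $\partial_t\widehat{\mathcal{K}_2}(t,\xi)$ acting on $\widehat{v_1}$, I would split the frequency space into a low-frequency zone $\{|\xi|\le\varepsilon\}$ and a high-frequency zone $\{|\xi|\ge\varepsilon\}$, with $\varepsilon$ chosen small (so as to stay away from the threshold where the two characteristic roots of $\lambda^2+\lambda+|\xi|^{2\sigma}=0$ coalesce), and estimate the multiplier pointwise on each zone separately before recombining.

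On the low-frequency zone the roots are real, and expanding the square root for small $|\xi|$ shows that the dominant root satisfies $\lambda_{21}(\xi)\approx -|\xi|^{2\sigma}$ while the subdominant one satisfies $\lambda_{22}(\xi)\approx -1$, with $\lambda_{21}-\lambda_{22}\approx 1$. This yields the diffusion-type bound $|\widehat{\mathcal{K}_2}(t,\xi)|\lesssim e^{-c|\xi|^{2\sigma}t}$, and correspondingly $\big||\xi|^{\sigma}\widehat{\mathcal{K}_2}\big|\lesssim|\xi|^{\sigma}e^{-c|\xi|^{2\sigma}t}$ and $|\partial_t\widehat{\mathcal{K}_2}|\lesssim|\xi|^{2\sigma}e^{-c|\xi|^{2\sigma}t}$. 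For the $(L^1\cap L^2)$--$L^2$ estimates I would insert $\|\widehat{v_1}\|_{L^\infty}\lesssim\|v_1\|_{L^1}$, so that the only remaining computation is the standard integral $\int_{|\xi|\le\varepsilon}|\xi|^{2b}e^{-2c|\xi|^{2\sigma}t}\,\rmd\xi\lesssim (1+t)^{-\frac{n+2b}{2\sigma}}$ with $b\in\{0,\sigma,2\sigma\}$; taking square roots produces exactly the rates $-\frac{n}{4\sigma}$, $-\frac{n}{4\sigma}-\frac{1}{2}$ and $-\frac{n}{4\sigma}-1$ claimed in Proposition \ref{Proposition2.2}.

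On the high-frequency zone the roots are complex with $\mathrm{Re}\,\lambda_{21,22}=-\tfrac12$, so $|e^{\lambda_{2j}t}|=e^{-t/2}$, while $|\lambda_{21}-\lambda_{22}|\approx|\xi|^{\sigma}$; hence $|\widehat{\mathcal{K}_2}(t,\xi)|\lesssim|\xi|^{-\sigma}e^{-t/2}$ and $\big||\xi|^{\sigma}\widehat{\mathcal{K}_2}\big|,\,|\partial_t\widehat{\mathcal{K}_2}|\lesssim e^{-t/2}$. Using Plancherel with $\|\widehat{v_1}\|_{L^2}$, the $|D|^{\sigma}v$ and $v_t$ contributions are bounded by $e^{-t/2}\|v_1\|_{L^2}$, whereas the $|\xi|^{-\sigma}$ factor in the estimate for $v$ itself forces the weaker norm $\dot H^{-\sigma}$, which is precisely why the first line of the proposition carries $\|v_1\|_{L^1\cap H^{-\sigma}}$ rather than $\|v_1\|_{L^1\cap L^2}$. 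The exponentially small high-frequency part is then absorbed into the polynomial low-frequency rate. The $L^2$--$L^2$ estimates follow from the same multiplier bounds, but using $\|\widehat{v_1}\|_{L^2}$ throughout and only the uniform boundedness of the multipliers on the low zone (up to the harmless polynomial factor recorded in the proposition), without invoking the $L^1$ gain.

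The step I expect to be the main obstacle is the intermediate region near the coalescence set, where $\lambda_{21}-\lambda_{22}\to 0$ and the representation $\widehat{\mathcal{K}_2}=\frac{e^{\lambda_{21}t}-e^{\lambda_{22}t}}{\lambda_{21}-\lambda_{22}}$ is only formally singular. There the apparent singularity is removable: the multiplier extends continuously to the double-root value $t\,e^{\lambda t}$, and on this bounded annulus (away from $\xi=0$) the real parts stay $\le -c<0$, so one again obtains $|\widehat{\mathcal{K}_2}|\lesssim(1+t)e^{-ct}\lesssim e^{-c't}$. Making this precise --- verifying the required continuity and uniform bounds of the multiplier across the coalescence set and checking the sign and size of $\mathrm{Re}\,\lambda_{2j}$ uniformly in each zone --- is the technically delicate part, after which gluing the two (or three) zones together delivers all the stated estimates.
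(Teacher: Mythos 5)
Your proposal is correct, and it is essentially "the paper's approach" in the only meaningful sense: the paper gives no proof of Proposition \ref{Proposition2.2} at all, quoting it directly from \cite{PhamReissig2017}, and the Plancherel-plus-frequency-zone multiplier analysis you outline (diffusive bound $e^{-c|\xi|^{2\sigma}t}$ at low frequencies, oscillatory roots with $\mathrm{Re}\,\lambda_{21,22}=-\tfrac12$ at high frequencies, removable singularity at the coalescence set) is exactly how that reference and the related literature establish these estimates, including the appearance of $\|v_1\|_{L^1\cap H^{-\sigma}}$ in the first estimate via the factor $|\xi|^{-\sigma}\approx\langle\xi\rangle^{-\sigma}$ at high frequencies. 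Two small remarks. First, you correctly work with the characteristic equation $\lambda^2+\lambda+|\xi|^{2\sigma}=0$; the paper's displayed formula for $\lambda_{21,22}$ (with discriminant $1-4|\xi|^{4\sigma}$) contains a typo, so trust your version. Second, your low-frequency claim $|\partial_t\widehat{\mathcal{K}_2}|\lesssim|\xi|^{2\sigma}e^{-c|\xi|^{2\sigma}t}$ is not quite right as stated: the subdominant root contributes a term of size $e^{-ct}$ that cannot be absorbed into $|\xi|^{2\sigma}e^{-c|\xi|^{2\sigma}t}$ (indeed $\partial_t\widehat{\mathcal{K}_2}(0,\xi)=1$ for every $\xi$, which your bound fails to reproduce as $\xi\to0$); the correct bound is $|\xi|^{2\sigma}e^{-c|\xi|^{2\sigma}t}+e^{-ct}$, and since the extra term only produces $e^{-ct}\|v_1\|_{L^1}$ after integration over $\{|\xi|\le\varepsilon\}$, it is dominated by $(1+t)^{-\frac{n}{4\sigma}-1}\|v_1\|_{L^1}$ and all six estimates survive unchanged.
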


\subsection{Philosophy of our approach}\label{subsection2.2}
By Duhamel's principle, we shall  interpret solutions to (\ref{Main.Eq.1}) as solutions of the following coupled system of non-linear integral equations:
$$\begin{cases}
u(t,x)= u^{\text{lin}}(t,x) + \displaystyle\int_0^t \mathcal{K}_{1}(t-\tau,x) \ast_x |v(\tau,x)|^p d\tau=: u^{\text{lin}}(t,x)+ u^{\text{non}}(t,x), \\
\\
v(t,x)= v^{\text{lin}}(t,x) + \displaystyle\int_0^t \mathcal{K}_{2}(t-\tau,x) \ast_x |u(\tau,x)|^q d\tau=: v^{\text{lin}}(t,x)+ v^{\text{non}}(t,x).
\end{cases}$$
Here the terms $u^{\rm lin}(t,x)$ and $v^{\rm lin}(t,x)$ are defined in (\ref{solution1}) and (\ref{solution2}), respectively. We introduce the family $\{X(t)\}_{t>0}$ of the solution spaces
$$ X(t):= \Big(\mathcal{C}\big([0,t],H^{\sigma} \cap L^{\infty} \cap L^q \big)\cap \mathcal{C}^1\big([0,t],L^2\big)\Big) \times \Big(\mathcal{C}\big([0,t],H^{\sigma}\big)\cap \mathcal{C}^1\big([0,t],L^2\big)\Big) $$
with the norm
\begin{align*}
\|(u,v)\|_{X(t)}&:= \sup_{0\le \tau \le t} \Big( f_1(\tau)^{-1}\|u(\tau,\cdot)\|_{L^q} + f_2(\tau)^{-1} \|u(\tau,\cdot)\|_{L^{\infty}} \\
&\hspace{2cm} +f_3(\tau)^{-1}\big\||D|^{\sigma} u(\tau,\cdot)\big\|_{L^2}+f_3(\tau)^{-1}\big\|u_t(\tau,\cdot)\big\|_{L^2} \\
&\hspace{2cm} +g_1(\tau)^{-1}\|v(\tau,\cdot)\|_{L^2} + g_2(\tau)^{-1}\big\||D|^{\sigma} v(\tau,\cdot)\big\|_{L^2}+ g_3(\tau)^{-1}\|v_t(\tau,\cdot)\|_{L^2} \Big),
\end{align*}
where
\begin{align*}
f_1(\tau)= (1+\tau)^{-\frac{n}{\sigma}(1-\frac{1}{q})+ 1+ [\varepsilon_1(p)]^{+}},\quad f_2(\tau)= (1+\tau)^{-\frac{n}{\sigma}+ 1+[\varepsilon_1(p)]^{+}}, \quad f_3(\tau) = (1+\tau)^{-\frac{n}{4\sigma}+ [\varepsilon_1(p)]^{+}},
\end{align*}
and
\begin{align*}
g_1(\tau)= (1+\tau)^{-\frac{n}{4\sigma}+[\varepsilon_2(q)]^{+}},\quad g_2(\tau)= (1+\tau)^{-\frac{n}{4\sigma}-\frac{1}{2}+[\varepsilon_2(q)]^{+}}, \quad g_3(\tau)=(1+\tau)^{-\frac{n}{2\sigma}+[\varepsilon_2(q)]^{+}}.
\end{align*}
We define the following operator for all $t>0$:
\begin{align*}
\mathcal{N}: \,\, (u,v) \in X(t) \longrightarrow \mathcal{N}[u,v](t,x) &= \big(u^{\text{lin}}(t,x), v^{\text{lin}}(t,x)\big)+ (u^{\text{non}}(t,x), v^{\text{non}}(t,x)\big).
\end{align*}
We shall indicate that the operator $\mathcal{N}$ fulfills the following two inequalities:
\begin{align}
\|\mathcal{N}[u,v]\|_{X(t)} &\lesssim \|(u_1,v_1)\|_{\mathcal{D}}+ \|(u,v)\|^p_{X(t)}+ \|(u,v)\|^q_{X(t)}, \label{pt4.3} \\
\|\mathcal{N}[u,v]- \mathcal{N}[\bar{u},\bar{v}]\|_{X(t)} &\lesssim \|(u,v)-(\bar{u},\bar{v})\|_{X(t)} \Big(\|(u,v)\|^{p-1}_{X(t)}+ \|(\bar{u},\bar{v})\|^{p-1}_{X(t)} \nonumber \\
&\hspace{6cm}+ \|(u,v)\|^{q-1}_{X(t)}+ \|(\bar{u},\bar{v})\|^{q-1}_{X(t)}\Big). \label{pt4.4}
\end{align}
Then, we may conclude global (in time) existence results of small data solutions by applying Banach's fixed point theorem. To do this, the following proposition from Harmonic Analysis comes into play in our proof.

\begin{proposition}[Fractional Gagliardo-Nirenberg inequality, see \cite{DaoReissig1, DaoReissig2}] \label{FractionalG-N}
Let $1<q,\, q_1,\, q_2<\infty$, $a >0$ and $s\in [0,a)$. Then, it holds
$$ \|u\|_{\dot{H}^{s}_q}\lesssim \|u\|_{L^{q_1}}^{1-\theta}\,\, \|u\|_{\dot{H}^a_{q_2}}^\theta, $$
where
$$ \theta=\theta_{s,a}(q,q_1,q_2)=\frac{\frac{1}{q_1}-\frac{1}{q}+\frac{s}{n}}{\frac{1}{q_1}-\frac{1}{q_2}+\frac{a}{n}}\quad \text{ and }\quad \frac{s}{a}\leq \theta\leq 1. $$
\end{proposition}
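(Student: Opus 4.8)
The plan is to reduce the statement to dyadic frequency pieces via a Littlewood--Paley decomposition and then to balance the two resulting one-sided bounds by optimizing the splitting frequency. Throughout, recall that $\|u\|_{\dot H^s_q}=\big\||D|^s u\big\|_{L^q}$, so the claim reads
\[ \big\||D|^s u\big\|_{L^q}\lesssim \|u\|_{L^{q_1}}^{1-\theta}\,\big\||D|^a u\big\|_{L^{q_2}}^{\theta}. \]
As a preliminary sanity check I would first test the inequality on the rescaled functions $u(\lambda\,\cdot)$ and match the homogeneities in $\lambda$ on both sides; this forces exactly the stated value of $\theta$ and explains the dimensional balance $\tfrac1q-\tfrac sn=(1-\theta)\tfrac1{q_1}+\theta\big(\tfrac1{q_2}-\tfrac an\big)$. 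The hypothesis $1<q,q_1,q_2<\ity$ is what allows the Littlewood--Paley machinery ($L^r$-boundedness of the projections, Mikhlin's multiplier theorem) to be invoked below.

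First I would fix a dyadic partition $u=\sum_{j\in\Z}P_j u$, where $P_j$ localizes the frequencies to $|\xi|\approx 2^j$. On each annulus Bernstein's inequalities give, on one hand,
\[ \big\||D|^s P_j u\big\|_{L^q}\lesssim 2^{j\beta_1}\|u\|_{L^{q_1}},\qquad \beta_1:=s+n\Big(\tfrac1{q_1}-\tfrac1q\Big), \]
and, writing $|D|^s P_j=|D|^{\,s-a}P_j\,|D|^a$, on the other hand,
\[ \big\||D|^s P_j u\big\|_{L^q}\lesssim 2^{j\beta_2}\big\||D|^a u\big\|_{L^{q_2}},\qquad \beta_2:=s-a+n\Big(\tfrac1{q_2}-\tfrac1q\Big). \]
The decisive algebra is that $\beta_1-\beta_2=n\big(\tfrac1{q_1}-\tfrac1{q_2}+\tfrac an\big)=:nD$ equals $n$ times the denominator in the formula for $\theta$, while $\theta=\beta_1/(nD)$ and $1-\theta=-\beta_2/(nD)$. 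Hence the admissibility constraints $\tfrac sa\le\theta\le1$ (together with $D>0$) translate precisely into the sign conditions $\beta_1>0$ and $\beta_2<0$ that I need for the summation.

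Next I would combine the two bounds. By the triangle inequality in $L^q$,
\[ \big\||D|^s u\big\|_{L^q}\le\sum_{j\in\Z}\big\||D|^s P_j u\big\|_{L^q}\le\sum_{j\in\Z}\min\!\Big(C\,2^{j\beta_1}A,\;C\,2^{j\beta_2}B\Big), \]
with $A:=\|u\|_{L^{q_1}}$ and $B:=\big\||D|^a u\big\|_{L^{q_2}}$. Choosing the cutoff $J$ so that $2^{JnD}\approx B/A$ and using the $2^{j\beta_1}$ bound for $j\le J$ and the $2^{j\beta_2}$ bound for $j>J$, the two geometric series converge (this is exactly where $\beta_1>0$ and $\beta_2<0$ enter) and sum to
\[ \big\||D|^s u\big\|_{L^q}\lesssim 2^{J\beta_1}A=(B/A)^{\beta_1/(nD)}A=A^{1-\theta}B^{\theta}, \]
which is the asserted estimate.

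I expect the genuine obstacle to be the endpoint behaviour rather than the interior bookkeeping. The crude triangle inequality above only produces convergent series under the strict signs $\beta_1>0$, $\beta_2<0$, so it settles the open range $0<\theta<1$; the boundary cases ($\theta=1$, which degenerates to the Sobolev embedding $\dot H^a_{q_2}\hookrightarrow\dot H^s_q$, and $\theta=\tfrac sa$) must be handled separately, either directly or by replacing the triangle inequality with the square-function characterization of the $L^q$-norm, which is licit precisely because $1<q<\ity$. The remaining tasks---justifying the two Bernstein inequalities and the convergence of the Littlewood--Paley series on Schwartz functions, then passing to the general case by density---are routine.
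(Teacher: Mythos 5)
The paper never proves this proposition: it is imported as a known result from Harmonic Analysis (the standard fractional Gagliardo--Nirenberg inequality), so there is no in-paper argument to compare yours against. Judged on its own merits, your Littlewood--Paley scheme is the classical proof of this inequality \emph{in a restricted range}, and its internal algebra is correct: $\beta_1-\beta_2=nD$, $\theta=\beta_1/(nD)$, $1-\theta=-\beta_2/(nD)$, and the optimization over the splitting frequency $J$ does reproduce $A^{1-\theta}B^{\theta}$.

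The genuine gap is the unspoken assumption $q\ge\max\{q_1,q_2\}$. Your first dyadic bound rests on Bernstein's inequality $\|P_j u\|_{L^q}\lesssim 2^{jn(1/q_1-1/q)}\|P_j u\|_{L^{q_1}}$, which is valid only for $q\ge q_1$; likewise the second bound requires $q\ge q_2$. Neither is implied by the hypotheses $1<q,q_1,q_2<\infty$, $s\in[0,a)$, $s/a\le\theta\le 1$. Concretely, take $n=1$, $a=2$, $s=1$, $q_1=4$, $q_2=2$, $q=3$: then $\theta=11/21\in[1/2,1]$, so this is an admissible instance of the proposition with $q<q_1$, and there your bound $\||D|^s P_j u\|_{L^3}\lesssim 2^{j\beta_1}\|u\|_{L^4}$ is false for every constant --- take $u$ equal to a sum of $N$ widely separated copies of a fixed $P_j$-localized bump, so that the ratio of the two sides grows like $N^{1/3-1/4}$. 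Similarly $n=4$, $a=1$, $s=0$, $q_1=2$, $q_2=4$, $q=3$ gives the admissible value $\theta=1/3$ with $q<q_2$, which defeats your second bound by the same construction. So your argument establishes the proposition only when $q\ge\max\{q_1,q_2\}$ (and, as you yourself flag, only for $s/a<\theta<1$); the remaining admissible cases require a different mechanism, e.g.\ complex interpolation between $L^{q_1}$ and $\dot H^{a}_{q_2}$, which handles the on-segment case $1/q=(1-\theta)/q_1+\theta/q_2$ with $s=\theta a$, combined with a Sobolev embedding to move off the segment --- or simply the literature citation the paper intends. In fairness, every application in this paper has $q_1=q_2=2$, $s=0$ and $q\ge 2$, so your restricted version would cover the paper's actual needs; but it does not prove the proposition as stated.
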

\subsection{Proof of Theorem \ref{Theorem1}} Let us divide our consideration into three cases as follows:

\noindent \textbf{First, let us assume that $q \leq q_{\rm crit}$ and $p > p_{\rm crit}$}. Thus, this implies that
 \begin{align*}
    \displaystyle\frac{pq+p+1}{2pq-p-1} = \displaystyle \max\left\{\frac{2q+1}{pq +q-2},  \frac{pq+p+1}{2pq-p-1} \right\},
 \end{align*}
 combining with the relation (\ref{relation10}), we choose $[\varepsilon_1(p)]^+ = 0$ and $[\varepsilon_2(q)]^+ = \varepsilon_2(q)$. From Proposition \ref{Proposition2.1} and \ref{Proposition2.2} we have the conclusion
\begin{align*}
    \|(u^{\rm lin}, v^{\rm lin})\|_{X(t)} \lesssim \|(u_1,v_1)\|_{\mathcal{D}}.
\end{align*}
For this reason, to prove the estimate (\ref{pt4.3}) we only need to prove the following estimate:
\begin{align}\label{Es1}
    \|(u^{\rm non}, v^{\rm non})\|_{X(t)} \lesssim \|(u,v)\|^p_{X(t)}+ \|(u,v)\|^q_{X(t)}.
\end{align}
In the first step we estimate the norm $\|u^{\rm non}(t,\cdot)\|_{L^{\alpha}}$ with $\alpha \in \{q, \infty\}$ in the following way:
\begin{align*}
    \|u^{\rm non}(t,\cdot)\|_{L^{\alpha}} \lesssim& \int_0^t (1+t-\tau)^{-\frac{n}{\sigma}(1-\frac{1}{\alpha})+1} \||v(\tau,\cdot)|^p\|_{L^1} d\tau.
\end{align*}
Applying the fractional Gagliardo-Nirenberg inequality from Proposition \ref{FractionalG-N} we can conclude with $\sigma < n < 2\sigma$ that
\begin{align}
    \||v(\tau,\cdot)|^p\|_{L^1} = \|v(\tau,\cdot)\|_{L^p}^p &\lesssim (1+\tau)^{-\frac{n}{2\sigma}(p-1)+p\varepsilon_2(q)} \|(u,v)\|_{X(t)}^{p}, \label{Estimate1}\\
    \||v(\tau,\cdot)|^p\|_{L^2} = \|v(\tau,\cdot)\|_{L^{2p}}^p &\lesssim (1+\tau)^{-\frac{n}{2\sigma}(p-\frac{1}{2})+p\varepsilon_2(q)} \|(u,v)\|_{X(t)}^{p} \label{Estimate2}.
\end{align}
The condition (\ref{condition1.1.1}) leads to
\begin{align}
    -\frac{n}{2\sigma}(p-1)+p\varepsilon_2(q) < -1, \label{re1}
\end{align}
associated with relation $-\frac{n}{\sigma} +1 > -1$, so we obtain
\begin{align*}
    \int_0^t (1+t-\tau)^{-\frac{n}{\sigma}(1-\frac{1}{\alpha})+1} \||v(\tau,\cdot)|^p\|_{L^1} d\tau \lesssim (1+t)^{-\frac{n}{\sigma}(1-\frac{1}{\alpha})+1} \|(u,v)\|_{X(t)}^{p}.
\end{align*}
From this, one may achieve
\begin{align}\label{Main Estimate 1}
    \|u^{\rm non}(t,\cdot)\|_{L^{\alpha}} \lesssim (1+t)^{-\frac{n}{\sigma}(1-\frac{1}{\alpha})+1} \|(u,v)\|_{X(t)}^{p} \text{ for } \alpha \in \{q, \infty\}.
\end{align}
In the second step we control the norms $\||D|^{\sigma}u^{\rm non}(t,\cdot)\|_{L^{2}}$ and $\|u_t^{\rm non}(t,\cdot)\|_{L^{2}}$ as follows:
\begin{align*}
    \||D|^{\sigma} u^{\rm non}(t,\cdot)\|_{L^2} + \|\partial_t u^{\rm non}(t,\cdot)\|_{L^2} &\lesssim \int_0^{t/2}(1+t-\tau)^{-\frac{n}{4\sigma}} \||v(\tau,\cdot)|^p\|_{L^1\cap L^2} d\tau \\
    &\quad +\int_{t/2}^t \| |v(\tau,\cdot)|^p\|_{L^2} d\tau \\
    &=: I_1(t) + I_2(t).
\end{align*}
Using the estimates (\ref{Estimate1}), (\ref{Estimate2}) and the relation (\ref{re1}) we derive
\begin{align*}
    I_1(t) &\lesssim (1+t)^{-\frac{n}{4\sigma}} \|(u,v)\|_{X(t)}^p\int_0^{t/2} (1+\tau)^{-\frac{n}{2\sigma}(p-1)+p\varepsilon_2(q)} d\tau \lesssim (1+t)^{-\frac{n}{4\sigma}} \|(u,v)\|_{X(t)}^p,\\
    I_2(t) &\lesssim  (1+t)^{ -\frac{n}{2\sigma}(p-\frac{1}{2})+1+p\varepsilon_2(q)} \|(u,v)\|_{X(t)}^p\lesssim (1+t)^{-\frac{n}{4\sigma}} \|(u,v)\|_{X(t)}^p.
\end{align*}
Therefore, we have established the following estimate:
\begin{align}\label{Main Estimate 2}
     \||D|^{\sigma} u^{\rm non}(t,\cdot)\|_{L^2} + \|u_t^{\rm non}(t,\cdot)\|_{L^2} \lesssim (1+t)^{-\frac{n}{4\sigma}} \|(u,v)\|_{X(t)}^p.
\end{align}
Next, we will estimate the norms $\|v^{\rm non}(t,\cdot)\|_{L^2}$, $\||D|^{\sigma}v^{\rm non}(t,\cdot)\|_{L^2}$ and $\|v_t^{\rm non}(t,\cdot)\|_{L^2}$. Using again the $(L^1 \cap L^2)-L^2$ estimates if $\tau \in [0, t/2]$ and the $L^2 - L^2$ estimates if $\tau \in [t/2, t]$ from Proposition \ref{Proposition2.2} for  $\||D|^{\sigma}v^{\rm non}(t,\cdot)\|_{L^2}$ and $\|v_t^{\rm non}(t,\cdot)\|_{L^2}$, we derive
\begin{align*}
    \|v^{\rm non}(t,\cdot)\|_{L^2} \lesssim& \int_0^t (1+t-\tau)^{-\frac{n}{4\sigma}}\||u(\tau,\cdot)|^q\|_{L^1 \cap L^2} d\tau,\\
    \||D|^{\sigma} v^{\rm non}(t,\cdot)\|_{L^2} \lesssim& \int_0^{t/2} (1+t-\tau)^{-\frac{n}{4\sigma}-\frac{1}{2}} \| |u(\tau,\cdot)|^q\|_{L^1 \cap L^2} d\tau\\
    &+ \int_{t/2}^t (1+t-\tau)^{-\frac{1}{2}} \| |u(\tau,\cdot)|^q \|_{L^2} d\tau,\\
    \|v_t^{\rm non}(t,\cdot)\|_{L^2} \lesssim& \int_0^{t/2} (1+t-\tau)^{-\frac{n}{4\sigma}-1} \| |u(\tau,\cdot)|^q\|_{L^1 \cap L^2} d\tau\\
    &+ \int_{t/2}^t \| |u(\tau,\cdot)|^q \|_{L^2} d\tau.
\end{align*}
From the definition of the norm in the solution space $X(t)$, we have the following estimates:
\begin{align*}
    \||u(\tau,\cdot)|^q\|_{L^1} = \|u(\tau,\cdot)\|_{L^q}^q &\lesssim (1+\tau)^{q-\frac{n}{\sigma}(q-1)} \|(u,v)\|_{X(t)}^q,\\
    \|u(\tau,\cdot)\|_{L^{\infty}}^q &\lesssim (1+\tau)^{q-\frac{nq}{\sigma}} \|(u,v)\|_{X(t)}^q.
\end{align*}
Using the fractional Gagliardo-Nirenberg inequality from Proposition \ref{FractionalG-N} we obtain
\begin{align*}
\||u(\tau,\cdot)|^q\|_{L^2}=\|u(\tau,\cdot)\|_{L^{2q}}^q \lesssim (1+\tau)^{q-\frac{n}{\sigma}(q-\frac{1}{2})} \|(u,v)\|_{X(t)}^q.
\end{align*}
As a consequence, we get
\begin{align*}
    \|v^{\rm non}(t,\cdot)\|_{L^2} \lesssim& (1+t)^{-\frac{n}{4\sigma}}\|(u,v)\|_{X(t)}^q \int_{0}^{t/2}(1+\tau)^{q-\frac{n}{\sigma}(q-1)}d\tau\\
    &+ (1+t)^{q-\frac{n}{\sigma}(q-1)}\|(u,v)\|_{X(t)}^q\int_{t/2}^t (1+t-\tau)^{-\frac{n}{4\sigma}} d\tau\\
    \lesssim & (1+t)^{-\frac{n}{4\sigma}+[\varepsilon_2(q)]^{+}} \|(u,v)\|_{X(t)}^q,\\
    \||D|^{\sigma} v^{\rm non}(t,\cdot)\|_{L^2} \lesssim& (1+t)^{-\frac{n}{4\sigma}-\frac{1}{2}} \|(u,v)\|_{X(t)}^q \int_{0}^{t/2}(1+\tau)^{q-\frac{n}{\sigma}(q-1)}d\tau\\
    &+ \|(u,v)\|_{X(t)}^q \int_{t/2}^t (1+t-\tau)^{-\frac{1}{2}} (1+\tau)^{q-\frac{n}{\sigma}(q-\frac{1}{2})} d\tau \\
    \lesssim& (1+t)^{-\frac{n}{4\sigma}-\frac{1}{2}+[\varepsilon_2(q)]^{+}} \|(u,v)\|_{X(t)}^q,\\
    \|v_t^{\rm non}(t,\cdot)\|_{L^2} \lesssim& (1+t)^{-\frac{n}{4\sigma}-1}\|(u,v)\|_{X(t)}^q\int_0^{t/2} (1+\tau)^{q-\frac{n}{\sigma}(q-1)} d\tau\\
    &+ \|(u,v)\|_{X(t)}^q\int_{t/2}^t (1+\tau)^{q-\frac{n}{\sigma}(q-\frac{1}{2})}d\tau\\
    \lesssim& (1+t)^{-\frac{n}{2\sigma}+[\varepsilon_2(q)]^{+}} \|(u,v)\|_{X(t)}^q.
\end{align*}
Hence, we link the previous estimates to (\ref{Main Estimate 1}) and (\ref{Main Estimate 2}) to get the estimate (\ref{Es1}). By the same way, let us now sketch briefly the proof of the estimate (\ref{pt4.4}). Taking two functions $(u, v)$ and $(\bar{u}, \bar{v})$ in the solution space $X(t)$ one notices that
\begin{align*}
    \mathcal{N}[u,v](t,x)-\mathcal{N}[\bar{u}, \bar{v}](t,x) = \left(u^{\rm non}(t,x)-\bar{u}^{\rm non}(t,x), v^{\rm non}(t,x)-\bar{v}^{\rm non}(t,x)\right).
\end{align*}
Then, applying H\"older's inequality and the fractional Gagliardo-Nirenberg inequality from Proposition \ref{FractionalG-N} we arrive at
\begin{align*}
    &\| |u(\tau,\cdot)|^{q}-|\bar{u}(\tau,\cdot)|^{q} \|_{L^{\gamma}} \\
    &\qquad \lesssim \|(u-\bar{u})(\tau,\cdot)\|_{L^{q\gamma}} \left(\|u(\tau,\cdot)\|_{L^{q\gamma}}^{q-1}+\|\bar{u}(\tau,\cdot)\|_{L^{q\gamma}}^{q-1}\right) \\
    &\qquad \lesssim (1+\tau)^{q-\frac{n}{\sigma}(q-\frac{1}{\gamma})} \|(u,v)-(\bar{u}, \bar{v})\|_{X(t)} \left(\|(u,v)\|_{X(t)}^{q-1}+ \|(\bar{u},\bar{v})\|_{X(t)}^{q-1} \right),\\
    &\| |v(\tau,\cdot)|^{p}-|\bar{v}(\tau,\cdot)|^{p}\|_{L^{\gamma}} \\
    &\qquad \lesssim \|(v-\bar{v})(\tau,\cdot)\|_{L^{p\gamma}} \left(\|v(\tau,\cdot)\|_{L^{p\gamma}}^{p-1}+\|\bar{v}(\tau,\cdot)\|_{L^{p\gamma}}^{p-1}\right) \\
    &\qquad \lesssim (1+\tau)^{-\frac{n}{2\sigma}(p-\frac{1}{\gamma})+p[\varepsilon_2(q)]^{+}} \|(u,v)-(\bar{u},\bar{v})\|_{X(t)}\left(\|(u,v)\|_{X(t)}^{p-1}+\|(\bar{u},\bar{v})\|_{X(t)}^{p-1}\right),
\end{align*}
where $\gamma = 1,2$. Then, performing similar steps of the proof as above, we gain the estimate (\ref{pt4.4}). All in all, Theorem \ref{Theorem1} has been proved. \medskip

\noindent \textbf{Next, let us consider the case $q > q_{\rm crit}$ and $p \leq p_{\rm crit}$}. This yields that
\begin{align*}
     \max\left\{\frac{pq+p+1}{2pq-p-1},  \frac{2q+1}{pq+q-2} \right\}= \frac{2q+1}{pq+q-2}.
\end{align*}
Observing from the condition (\ref{condition1.1.1}) we choose $[\varepsilon_1(p)]^{+} = \varepsilon_1(p)$, $[\varepsilon_2(q)]^{+} = 0$. Combining this with the relation (\ref{relation10}) gives
 \begin{align*}
    q-\frac{n}{\sigma}(q-1)+q[\varepsilon_1(p)]^{+} < -1.
\end{align*}
Then, following some steps as in the first case we can conclude Theorem \ref{Theorem1} in this case. \medskip

\noindent \textbf{Finally, let us consider the case $q > q_{\rm crit}$ and $p > p_{\rm crit}$}. Noticing that from the condition (\ref{condition1.1.1}) we choose $[\varepsilon_1(p)]^{+}= [\varepsilon_2(q)]^{+} = 0$ and repeat some steps as in the first case, we may finish the proof of Theorem \ref{Theorem1} in this case. Summarizing, the proof of Theorem \ref{Theorem1} is complete.

\section{Blow-up}\label{section4}
\subsection{Preliminaries}\label{Section3.1}
Before giving our proof, let us introduce the definition of weak solutions for \eqref{Main.Eq.1} together with some auxiliary tools.
\begin{definition}\label{Definition4.1}
    Let $p, q > 1, j = 1,2$ and $T \in (0, \infty]$. We say that
    \begin{align*}
        (u,v) \in \mathcal{C}([0,T), L^q(\mathbb{R}^n)) \times \mathcal{C}([0,T), L^2(\mathbb{R}^n))
    \end{align*}
    is a local (in time) weak solution to \eqref{Main.Eq.1}  if for any test functions $\Phi_{\rm j} = \Phi_{\rm j}(t,x): = \eta(t)\varphi_{\rm j}(x) $ with $\eta  \in \mathcal{C}_0^{\infty}[0,T)$, and $\varphi_{\rm j} \in \mathcal{C}^{\infty}(\mathbb{R}^n)$ satisfying that all their derivatives in $L^1(\mathbb{R}^n) \cap L^{\infty}(\mathbb{R}^n)$ the following relations hold:
\begin{align*}
       &\displaystyle\int_0^{T} \int_{\mathbb{R}^n} |v(t,x)|^p \Phi_{2}(t,x)dx dt + \int_{\mathbb{R}^n} u_1(x) \Phi_2(0,x) dx \\ &\qquad\quad\quad\quad\quad\quad\quad\quad =\displaystyle\int_0^T \int_{\mathbb{R}^n} u(t,x) \left(\partial_t^2 - \partial_t (-\Delta)^{\sigma} + (-\Delta)^{\sigma}\right)\Phi_2(t,x) dx dt,
\end{align*}
and
\begin{align*}
        &\displaystyle\int_0^{T} \int_{\mathbb{R}^n} |u(t,x)|^q \Phi_{1}(t,x)dx dt + \int_{\mathbb{R}^n} v_1(x) \Phi_1(0,x) dx \\ &\qquad\quad\quad\quad\quad\quad\quad\quad = \displaystyle\int_0^T \int_{\mathbb{R}^n} v(t,x) \left(\partial_t^2 - \partial_t + (-\Delta)^{\sigma}\right)\Phi_1(t,x) dx dt.
\end{align*}
If $T = \infty$, then we say that $(u,v)$ is a global (in time) weak solution to \eqref{Main.Eq.1}.
\end{definition}
Next, the existence of local (in time) solutions to \eqref{Main.Eq.1} can be verified by the following proposition (see, moreover, \cite{CazenaveHaraux,KainaneReissig2023,NishiharaWakasugi2014}).

\begin{proposition} \label{LocalExistence}
Assume that $1<\sigma<n< 2\sigma$ together with two exponents $p,q\ge 2$. Then, there exists a positive number $T=T(u_1,v_1)$ such that for any initial data $ (u_1,v_1)\in \mathcal{D}$ we have a unique local (in time) Sobolev solution
	$$ (u,v)\in \Big(\mathcal{C}\big([0,T],H^{\sigma}\big)\cap \mathcal{C}^{1}\big([0,T],L^{2}\big)\Big)^2 $$
to \eqref{Main.Eq.1}.
\end{proposition}
\begin{proof}
We will follow the same approach as we did in the proof of Theorem \ref{Theorem1} to verify Proposition \ref{LocalExistence}. For this reason, let us briefly sketch the proof of this proposition only. Indeed, we introduce the solution space by
$$
X(t):= \Big(\mathcal{C}\big([0,t],H^{\sigma}\big)\cap \mathcal{C}^{1}\big([0,t],L^{2}\big)\Big) \times \Big(\mathcal{C}\big([0,t],H^{\sigma}\big)\cap \mathcal{C}^{1}\big([0,t],L^{2}\big)\Big)
$$
together with the norm
$$ \|(u,v)\|_{X(t)}=\sup_{0\leq \tau\leq t}\big\{\Gamma(\tau,u)+ \Gamma(\tau,v)\big\}, $$
where
\begin{align*}
\Gamma(\tau,w) &:= \|w(\tau,\cdot)\|_{L^{2}} + \big\| |D|^{\sigma}w(\tau,\cdot)\big\|_{L^{2}}+ \big\|w_{t}(\tau,\cdot)\big\|_{L^{2}}
\end{align*}
with $w=u$ or $w=v$. We define the operator $\mathcal{N}$ for any $(u,v)\in X(T)$ as follows:
$$ \mathcal{N}(u,v)(t,x)= \big(u^{\rm lin}(t,x)+u^{\rm non}(t,x), v^{\rm lin}(t,x)+v^{\rm non}(t,x)\big), $$
where $w^{\rm lin}(t,x)$ and $w^{\rm non}(t,x)$, with $w=u$ or $w=v$, are defined as in the proof of Theorem \ref{Theorem1}. Again, by the aid of the fractional Gagliardo-Nirenberg inequality we may conclude that
\begin{align*}
\|\mathcal{N}(u,v)\|_{X(t)} &\le  C_0 \|(u_1,v_1)\|_{\mathcal{D}} +C_1(t)\|(u,v)\|_{X(t)}^{p}+C_2(t)\|(u,v)\|_{X(t)}^{q}, \\
\|\mathcal{N}(u,v)-\mathcal{N}(\bar{u},\bar{v})\|_{X(t)} &\le C_3(t)\|(u,v)-(\bar{u},\bar{v})\|_{X(t)} \Big( \|(u,v)\|_{X(t)}^{p-1}+\|(u,v)\|_{X(t)}^{q-1} \nonumber \\
&\hspace{6cm} +\|(\bar{u},\bar{v})\|_{X(t)}^{p-1}+\|(\bar{u},\bar{v})\|_{X(t)}^{q-1}\Big).
\end{align*}
Here $C_0$ is a non-negative constant and $C_k(t)$ with $k=1,2,3$ are time-dependent functions, provided that $C_k(t)$ with $k=1,2,3$ tend to $0$ as $t$ tends to $0$. In this way, the last two inequalities enable us to employ Banach's fixed point theorem to indicate the local (in time) existence of large data solutions to \eqref{Main.Eq.1}. To be more specific, we introduce
$$ M_0:= C_0 \|(u_1,v_1)\|_{\mathcal{D}} $$
and assume that $\|(u,v)\|_{X(t)}\le 2M_0$, $\|(\bar{u},\bar{v})\|_{X(t)}\le 2M_0$ for a suitable value of $t$. Thus, from the above two inequalities it follows immediately
\begin{align*}
\|\mathcal{N}(u,v)\|_{X(t)} &\le  M_0+C_1(t)(2M_0)^{p}+C_2(t)(2M_0)^{q}, \\
\|\mathcal{N}(u,v)-\mathcal{N}(\bar{u},\bar{v})\|_{X(t)} &\le C_3(t)\Big(2(2M_0)^{p-1}+2(2M_0)^{q-1}\Big)\|(u,v)-(\bar{u},\bar{v})\|_{X(t)}.
\end{align*}
Now let us choose $t$ suitably to fulfill the following conditions:
$$ C_1(t)(2M_0)^{p}\le \frac{M_0}{2},\quad C_2(t)(2M_0)^{q}\le \frac{M_0}{2}\quad\text{ and }\quad  C_3(t)\Big(2(2M_0)^{p-1}+2(2M_0)^{q-1}\Big)\le \frac{1}{2}.  $$
As a consequence, all these estimates lead to
$$ \|\mathcal{N}(u,v)\|_{X(t)}\le  2M_0 \quad \text{ and }\quad  \|\mathcal{N}(u,v)-\mathcal{N}(\bar{u},\bar{v})\|_{X(t)} \le \frac{1}{2}\|(u,v)-(\bar{u},\bar{v})\|_{X(t)}, $$
which imply the desired statement in Proposition \ref{LocalExistence} thanks to Banach's fixed point theorem.
\end{proof}
Eventually, let us recall several known ingredients which will be used in the proof of Theorem \ref{Theorem3}.
\begin{lemma}[see Corollary 3.1 in \cite{DabbiccoFujiwara2021}] \label{lemma4.1}
Let $\gamma > 0$ and $s:= \gamma- [\gamma]$. Then, the following estimates hold for all $x \in \R^n$ and $r>n$:
$$ \big|(-\Delta)^\gamma \langle x \rangle^{-r}\big| \lesssim
\begin{cases}
\langle x \rangle^{-r-2\gamma} &\text{ if } \gamma \text{ is an integer},\\
\langle x \rangle^{-n-2s} &\text{ otherwise }.
\end{cases} $$
\end{lemma}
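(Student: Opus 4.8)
The plan is to treat the two cases separately and to reduce the non-integer case to the integer one through the factorization $(-\Delta)^\gamma = (-\Delta)^s(-\Delta)^{[\gamma]}$, which is legitimate since on the Fourier side $|\xi|^{2s}|\xi|^{2[\gamma]} = |\xi|^{2\gamma}$.

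For the integer case $\gamma = [\gamma]$, the operator $(-\Delta)^\gamma$ is a differential operator of order $2\gamma$, so it suffices to control the classical derivatives $\partial_x^\alpha \langle x\rangle^{-r}$ with $|\alpha| = 2\gamma$. First I would show by induction on $|\alpha|$ that each derivative improves the decay by exactly one power. Starting from $\partial_{x_i}\langle x\rangle^{-r} = -r\,x_i\langle x\rangle^{-r-2}$ and using $|x_i|\langle x\rangle^{-2} \lesssim \langle x\rangle^{-1}$, one checks that every $\partial_x^\alpha \langle x\rangle^{-r}$ is a finite sum of terms of the form $P(x)\langle x\rangle^{-r-2j}$ with $\deg P \le 2j - |\alpha|$, whence $|\partial_x^\alpha \langle x\rangle^{-r}| \lesssim \langle x\rangle^{-r-|\alpha|}$. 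Taking $|\alpha| = 2\gamma$ yields the bound $\langle x\rangle^{-r-2\gamma}$.

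For the non-integer case I write $\gamma = [\gamma]+s$ with $s\in(0,1)$ and set $g := (-\Delta)^{[\gamma]}\langle x\rangle^{-r}$. By the integer case $g$ is smooth with $|g(x)| \lesssim \langle x\rangle^{-r-2[\gamma]}$ and $|\nabla^2 g(x)| \lesssim \langle x\rangle^{-r-2[\gamma]-2}$; since $r>n$ we have $r+2[\gamma] > n$, so $g \in L^1(\R^n)$. I then estimate $(-\Delta)^s g$ via the symmetric singular-integral representation
$$ (-\Delta)^s g(x) = \frac{c_{n,s}}{2}\int_{\R^n}\frac{2g(x)-g(x+z)-g(x-z)}{|z|^{n+2s}}\,dz, $$
splitting $\R^n$ into the near region $\{|z|\le \langle x\rangle/2\}$ and the far region $\{|z|>\langle x\rangle/2\}$. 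On the near region $y=x\pm z$ stays comparable to $x$, so a second-order Taylor expansion gives $|2g(x)-g(x+z)-g(x-z)| \lesssim |z|^2\langle x\rangle^{-r-2[\gamma]-2}$, and integrating against $|z|^{-n-2s}$ produces the faster rate $\lesssim \langle x\rangle^{-r-2[\gamma]-2s}$. On the far region the nonlocal tail dominates: there $|z|^{-n-2s} \lesssim \langle x\rangle^{-n-2s}$, so the $g(x\pm z)$ part is controlled by $\langle x\rangle^{-n-2s}\|g\|_{L^1}$, while the $g(x)$ part contributes $|g(x)|\int_{|z|>\langle x\rangle/2}|z|^{-n-2s}\,dz \lesssim \langle x\rangle^{-r-2[\gamma]-2s}$. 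Because $r+2[\gamma]>n$, every contribution is dominated by $\langle x\rangle^{-n-2s}$, which gives the claimed cap.

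The hard part will be the treatment of the fractional factor: one must show that the far-region tail is precisely what caps the decay at $\langle x\rangle^{-n-2s}$, independently of how fast $g$ itself decays, while simultaneously verifying that the near-region principal-value integral converges and yields only the faster rate $\langle x\rangle^{-r-2[\gamma]-2s}$. This relies on the uniform pointwise control of $g$ and $\nabla^2 g$ inherited from the integer case and on the integrability $g \in L^1$ guaranteed by $r>n$, which is exactly what makes the tail bound $\langle x\rangle^{-n-2s}\|g\|_{L^1}$ finite and hence the decay rate sharp.
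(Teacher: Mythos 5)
Your proposal is correct, but note that the paper itself offers no proof of this lemma: it is quoted verbatim (typo included) from the reference \cite{DabbiccoFujiwara2021}, so there is no in-paper argument to compare against. Your argument is essentially the standard one used in that reference: the integer case by the elementary induction showing $|\partial_x^\alpha \langle x\rangle^{-r}|\lesssim \langle x\rangle^{-r-|\alpha|}$, and the fractional case by factoring $(-\Delta)^\gamma=(-\Delta)^s(-\Delta)^{[\gamma]}$, writing $(-\Delta)^s g$ as the symmetric hypersingular integral, and splitting at $|z|\approx\langle x\rangle/2$, with the far-field tail $\langle x\rangle^{-n-2s}\|g\|_{L^1}$ producing the cap on the decay rate. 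All the quantitative steps check out: on the near region the Lipschitz bound $\langle x+w\rangle\ge\langle x\rangle-|w|\ge\langle x\rangle/2$ legitimizes the second-order Taylor estimate and $\int_{|z|\le\langle x\rangle/2}|z|^{2-n-2s}\,dz\approx\langle x\rangle^{2-2s}$ since $s\in(0,1)$, while on the far region $r+2[\gamma]>n$ guarantees both $g\in L^1$ and that every contribution is dominated by $\langle x\rangle^{-n-2s}$. Two small points deserve one line each in a polished write-up: (i) the factorization $(-\Delta)^\gamma=(-\Delta)^s(-\Delta)^{[\gamma]}$ is justified on the Fourier side because $\widehat{\langle x\rangle^{-r}}$ decays fast enough (for $r>n$ it is bounded with exponential decay) that multiplication by $|\xi|^{2\gamma}$ and by $|\xi|^{2s}|\xi|^{2[\gamma]}$ define the same tempered distribution; (ii) the pointwise second-difference representation of $(-\Delta)^s$ agrees with the Fourier definition for functions that are $C^2$ with bounded second derivatives and lie in the weighted space $L^1(\langle x\rangle^{-n-2s}dx)$, which $g$ satisfies by your integer-case bounds. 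With these remarks your proof is complete and self-contained.
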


\begin{lemma}[see Lemma 4 in \cite{DaoReissig2021}] \label{Lemma4.2}
Let $s\in (0,1)$. Assume that $\psi$ is a smooth function satisfying $\partial_x^2 \psi\in L^\ity$. For any $R>0$, let $\psi_R$ be a function defined by
$$ \psi_R(x)= \psi\big(R^{-1} x\big)\quad \text{ for all }x \in \R^n. $$
Then, $(-\Delta)^s (\psi_R)$ enjoys the following scaling property for all $x \in \R^n$:
$$(-\Delta)^s (\psi_R)(x)= R^{-2s}\big((-\Delta)^s \psi \big)\big(R^{-1} x\big). $$
\end{lemma}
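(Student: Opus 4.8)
The plan is to read the identity off the pointwise (singular integral) representation of the fractional Laplacian, since the hypothesis $\partial_x^2 \psi \in L^\ity$ is tailored precisely to make that representation legitimate. For $s \in (0,1)$ and a function $f$ that is bounded with bounded second derivatives, one has the absolutely convergent symmetric representation
$$ (-\Delta)^s f(x) = \frac{C_{n,s}}{2} \int_{\R^n} \frac{2f(x) - f(x+z) - f(x-z)}{|z|^{n+2s}}\, dz, $$
where $C_{n,s}>0$ is the usual normalising constant. The role of the symmetric numerator is that the second-order cancellation yields $|2f(x)-f(x+z)-f(x-z)| \lesssim \|\partial_x^2 f\|_{L^\ity}\,|z|^2$ near $z=0$, so the integrand is $O(|z|^{2-n-2s})$ and integrable there (because $2-2s>0$), while boundedness of $f$ makes $\int_{|z|\ge 1}|z|^{-n-2s}\,dz$ converge. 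First I would record this representation and observe that $\psi$, and hence every dilate $\psi_R$ (for which $\partial_x^2\psi_R = R^{-2}(\partial_x^2\psi)(R^{-1}\cdot)$ is again bounded), satisfies its hypotheses uniformly in $R$.

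Second, I would rewrite $(-\Delta)^s \psi_R(x)$ in the equivalent principal-value form and insert $\psi_R(\cdot)=\psi(R^{-1}\cdot)$:
$$ (-\Delta)^s \psi_R(x) = C_{n,s}\, \mathrm{p.v.}\!\int_{\R^n} \frac{\psi_R(x) - \psi_R(y)}{|x-y|^{n+2s}}\, dy = C_{n,s}\, \mathrm{p.v.}\!\int_{\R^n} \frac{\psi(R^{-1}x) - \psi(R^{-1}y)}{|x-y|^{n+2s}}\, dy. $$

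Third, and this is the computational heart, I would perform the change of variables $y = Rw$, for which $dy = R^n\, dw$ and $|x-y| = |x-Rw| = R\,|R^{-1}x - w|$, so $|x-y|^{n+2s} = R^{n+2s}|R^{-1}x - w|^{n+2s}$. Collecting the powers of $R$ produces the factor $R^n/R^{n+2s} = R^{-2s}$, and the remaining integral is exactly
$$ C_{n,s}\, \mathrm{p.v.}\!\int_{\R^n} \frac{\psi(R^{-1}x) - \psi(w)}{|R^{-1}x - w|^{n+2s}}\, dw = \big((-\Delta)^s \psi\big)(R^{-1}x), $$
whence $(-\Delta)^s \psi_R(x) = R^{-2s}\big((-\Delta)^s \psi\big)(R^{-1}x)$, as claimed.

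The only genuine obstacle is the justification layer rather than the algebra: one must be sure the principal-value integral converges and that the substitution is legitimate for a singular integral, not merely for an absolutely convergent one. I would resolve this by carrying out the computation in the symmetric representation above, which is absolutely convergent under the stated assumptions and transforms identically under $z \mapsto Rz$, so that no delicate cancellation of principal values is needed; the principal-value form is then recovered by symmetry. A Fourier-transform alternative, using $\widehat{(-\Delta)^s f} = |\cdot|^{2s}\widehat{f}$ together with the dilation rule $\widehat{\psi(R^{-1}\cdot)}(\xi) = R^n\,\widehat{\psi}(R\xi)$, is conceptually shorter but I would avoid relying on it here, since $\psi$ is only assumed smooth with bounded second derivatives and need not lie in $L^1$ or $L^2$, whereas the singular-integral argument uses exactly the hypothesis $\partial_x^2\psi \in L^\ity$ and nothing more.
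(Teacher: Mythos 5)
Your proof is correct in substance, and it is essentially the argument behind this lemma: the paper itself offers no proof (it simply cites \cite{DaoReissig2021}), and the proof in that reference is exactly your computation --- write $(-\Delta)^s$ as the singular integral, substitute $y=Rw$, and collect the factor $R^{n}/R^{n+2s}=R^{-2s}$. Your choice to run the substitution through the absolutely convergent second-difference form, rather than juggling principal values or Fourier transforms, is the right way to make the change of variables airtight.

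One small imprecision is worth flagging. Your representation is stated ``for a function $f$ that is bounded with bounded second derivatives,'' and the tail estimate $\int_{|z|\ge 1}|z|^{-n-2s}\,dz<\infty$ genuinely uses $\|f\|_{L^\infty}$; the second-difference bound $\|\partial_x^2 f\|_{L^\infty}|z|^2$ alone does not control the tail when $s\in(0,1)$ (take $f(x)=\langle x\rangle$ and $s\le 1/2$: the second difference grows like $|z|$ and the integral diverges). The lemma as stated assumes only $\partial_x^2\psi\in L^\infty$, so when you later say the argument uses ``exactly the hypothesis $\partial_x^2\psi\in L^\infty$ and nothing more,'' that is not quite accurate --- you are also using boundedness of $\psi$, without which the pointwise integral defining $(-\Delta)^s\psi$ need not even converge. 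This is really a defect of the lemma's statement (inherited from the cited source) rather than of your proof, and it is harmless in context: the only functions to which the paper applies the lemma are $\varphi(x)=\langle x\rangle^{-n-2\overline{\sigma}}$ and its dilates, which are bounded together with all their derivatives. You should simply state explicitly that you add (or observe in the application) the hypothesis $\psi\in L^\infty$, and the proof stands as written.
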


\subsection{Proof of Theorem \ref{Theorem3}}
At first, we introduce the test function $\eta= \eta(t)$:
\begin{align}
&1.\quad \eta \in \mathcal{C}_0^\ity \big([0,\ity)\big) \text{ and }
\eta(t)=\begin{cases}
1 & \text{ if }\, 0 \le t \le \frac{1}{2}, \\
\text{decreasing } &\text{ if } \frac{1}{2} < t < 1,\\
0 & \text{ if }\, t \ge 1,
\end{cases} \nonumber \\
&2.\quad \eta^{-\frac{\kappa'}{\kappa}}(t)\big(|\eta'(t)|^{\kappa'}+|\eta''(t)|^{\kappa'}\big) \le C \quad \text{ for any } t \in [1/2,1], \label{t13.1.1}
\end{align}
with $\kappa= p$ or $\kappa= q$, where $\kappa'$ is the conjugate of $\kappa$ and $C$ is a suitable positive constant. Next, we introduce the following radial space-dependent test function:
\begin{align*}
\varphi = \varphi(x) := \langle x \rangle^{-n-2\overline{\sigma}}=(1+|x|^2)^{-n/2-\overline{\sigma}},
\end{align*}
where $\overline{\sigma}$ is given by
$$ \overline{\sigma}= \begin{cases}
\text{an arbitrary constant }\epsilon \in (0,1) &\text{ if }\sigma \text{ is an integer number}, \\
\sigma- [\sigma] &\text{ if }\sigma \text{ is a fractional number}.
\end{cases}$$
Thus, it follows that $\overline{\sigma} \in (0,1)$. Let $R$ be a large parameter in $[0,\ity)$. For $j = 1,2$, we denote the two test functions
$$ \Psi_{j,R}(t,x):= \eta_R(t) \varphi_{j,R}(x), $$
 where $\eta_R(t):= \eta(R^{-2\sigma}t)$ , $\varphi_{j,R}(x):= \varphi(R^{-j}x)$, moreover, we define the functionals
\begin{align}
\mathcal{I}_{j,R} &:= \int_0^{\ity}\int_{\R^n}|v(t,x)|^p \Psi_{j,R}(t,x)\,dxdt= \int_0^{R^{2\sigma}}\int_{\R^n}|v(t,x)|^p \eta_R(t)\varphi_{j,R}(x)\,dxdt, \notag\\
\mathcal{J}_{j,R} &:=  \int_0^{\ity}\int_{\R^n}|u(t,x)|^q \Psi_{j,R}(t,x)\,dxdt= \int_0^{R^{2\sigma}}\int_{\R^n}|u(t,x)|^q \eta_R(t)\varphi_{j,R}(x)\,dxdt, \notag\\
\mathcal{I}_R &:= \int_0^{\ity}\int_{\R^n}|v(t,x)|^p\eta_{R}(t)\,dxdt= \int_0^{R^{2\sigma}}\int_{\R^n}|v(t,x)|^p \eta_R(t)\,dxdt,\notag\\
\mathcal{J}_{R} &:=  \int_0^{\ity}\int_{\R^n}|u(t,x)|^q \eta_R(t)\,dxdt= \int_0^{R^{2\sigma}}\int_{\R^n}|u(t,x)|^q \eta_R(t)\,dxdt. \notag
\end{align}
Since $\eta_R=\eta_R(t)$ and
$\varphi_{j,R}=\varphi_{j,R}(x)$ are increasing functions with respect to $R$, it is obvious to see that $\mathcal{I}_{j,R},\, \mathcal{I}_{R}$,  $\mathcal{J}_{j,R}$ and $\mathcal{J}_R$ also increase with respect to $R$.
 Let us assume that $(u,v)= \big(u(t,x),v(t,x)\big)$ is a global (in time) weak solution belonging to the class
 $$(\mathcal{C}([0,\infty), L^q(\mathbb{R}^n))\cap L^q([0, \infty) \times \mathbb{R}^n)) \times \mathcal{C}([0,\infty), L^2(\mathbb{R}^n))$$
 to (\ref{Main.Eq.1}). We use Definition \ref{Definition4.1} in place of $\Phi_{j}(t,x)= \Psi_{j,R}(t,x)$ with $j=1,2$ to obtain
\begin{align}
&\mathcal{I}_{2,R} +\int_{\R^n} u_1(x)\varphi_{2,R}(x)\,dx \nonumber \\
&\qquad = \int_0^{R^{2\sigma}}\int_{\mathbb{R}^n} u(t,x)\left(\partial_t^2 -\partial_t (-\Delta)^{\sigma} +(-\Delta)^{\sigma}\right)\Psi_{2,R}(t,x)dxdt,\label{relation1}
\end{align}
and
\begin{align}
&\mathcal{J}_{1,R} + \int_{\R^n} v_1(x)\varphi_{1,R}(x)\,dx \nonumber \\
&\qquad = \int_0^{R^{2\sigma}}\int_{\mathbb{R}^n} v(t,x)\left(\partial_t^2 -\partial_t+(-\Delta)^{\sigma}\right)\Psi_{1,R}(t,x)dxdt. \label{relation2}
\end{align}
After applying H\"{o}lder's inequality with $\frac{1}{q}+\frac{1}{q'}=1$, then carrying out the change of variables $\tilde{t}:= R^{-2\sigma}t$, $\tilde{x}:= R^{-2}x$ combined with the property (\ref{t13.1.1}), we may proceed as follows:
\begin{align*}
&\int_0^{R^{2\sigma}}\int_{\mathbb{R}^n} |u(t,x)| \left|\left(\partial_t^2 -\partial_t (-\Delta)^{\sigma} +(-\Delta)^{\sigma}\right)\Psi_{2,R}(t,x)\right| dxdt\\
&\quad\lesssim \left(\int_0^{R^{2\sigma}} \int_{\mathbb{R}^n} |u(t,x)|^q \Psi_{2,R}(t,x)dxdt\right)^{\frac{1}{q}}\\ &\hspace{2cm} \times \left(\int_0^{R^{2\sigma}}\int_{\mathbb{R}^n} \left|\left(\partial_t^2 -\partial_t (-\Delta)^{\sigma} +(-\Delta)^{\sigma}\right)\Psi_{2,R}(t,x)\right|^{q'} \Psi_{2,R}^{-\frac{q'}{q}}(t,x)dxdt\right)^{\frac{1}{q'}}\\
&\quad\lesssim \mathcal{J}_{2,R}^{\frac{1}{q}} \left(2R^{-4\sigma}+R^{-6\sigma}\right) \left(\int_0^{R^{2\sigma}}\int_{\mathbb{R}^n} \varphi_{2,R}(x)dxdt \right)^{\frac{1}{q'}}\\
&\quad\lesssim \mathcal{J}_{2,R}^{\frac{1}{q}} R^{-4\sigma+\frac{2n+2\sigma}{q'}} \left(\int_0^1 \int_{\mathbb{R}^n} \varphi(\tilde{x}) d\tilde{x}d\tilde{t}\right) = C \mathcal{J}_{2,R}^{\frac{1}{q}} R^{-4\sigma+\frac{2n+2\sigma}{q'}} .
\end{align*}
From this and the relation (\ref{relation1}), we have the conclusion
\begin{align}\label{impor-ine1}
     \mathcal{I}_{2,R} + \int_{\R^n} u_1(x)\varphi_{2,R}(x)\,dx \lesssim   \mathcal{J}_{2,R}^{\frac{1}{q}} R^{-4\sigma+\frac{2n+2\sigma}{q'}} \text{ for all } R \geq 1.
\end{align}
In the same way, using the change of variables $\tilde{t}:= R^{-2\sigma}t$ and $\tilde{x}:= R^{-1} x$ we also get the following estimates:
\begin{align*}
&\int_0^{R^{2\sigma}}\int_{\mathbb{R}^n} |v(t,x)| \left|\left(\partial_t^2 -\partial_t+(-\Delta)^{\sigma}\right)\Psi_{1,R}(t,x)\right|dxdt\\
&\quad\lesssim \left(\int_0^{R^{2\sigma}} \int_{\mathbb{R}^n} |v(t,x)|^p \Psi_{1,R}(t,x) dxdt\right)^{\frac{1}{p}}\\
&\hspace{2cm} \times \left(\int_0^{R^{2\sigma}}\int_{\mathbb{R}^n} \left|\left(\partial_t^2 -\partial_t+(-\Delta)^{\sigma}\right)\Psi_{1,R}(t,x)\right|^{p'} \Psi_{1,R}^{-\frac{p'}{p}}(t,x) dxdt\right)^{\frac{1}{p'}}\\
&\quad\lesssim \mathcal{I}_{1,R}^{\frac{1}{p}} \left(R^{-4\sigma}+ 2R^{-2\sigma}\right) \left(\int_0^{R^{2\sigma}} \int_{\mathbb{R}^n}  \varphi_{1,R}(x) dxdt\right)^{\frac{1}{p'}}\\
&\quad\lesssim \mathcal{I}_{1,R}^{\frac{1}{p}} R^{-2\sigma+\frac{n+2\sigma}{p'}} \left(\int_0^1 \int_{\mathbb{R}^n}\varphi(\tilde{x}) d\tilde{x} d\tilde{t}\right)^{\frac{1}{p'}} = C \,\mathcal{I}_{1,R}^{\frac{1}{p}} R^{-2\sigma+\frac{n+2\sigma}{p'}}.
\end{align*}
Furthermore, combining this with the relation
 (\ref{relation2}) one arrives at
\begin{align}\label{impor-ine2}
    \mathcal{J}_{1,R} + \int_{\R^n} v_1(x)\varphi_{1,R}(x)\,dx \lesssim   \mathcal{I}_{1,R}^{\frac{1}{p}} R^{-2\sigma+\frac{n+2\sigma}{p'}} &\text{ for all } R \geq 1.
\end{align}
Due to $u \in L^q([0, \infty) \times \mathbb{R}^n)$, we apply the monotone convergence theorem to derive
\begin{align*}
    \lim_{R\to\infty} \mathcal{J}_{1,R} = \lim_{R\to\infty} \mathcal{J}_{2,R} = \lim_{R\to\infty}\mathcal{J}_R= \int_0^{\infty} \int_{\mathbb{R}^n} |u(t,x)|^q dxdt  < \infty .
\end{align*}
 Therefore there exists a sufficiently large positive number $R_0$ such that the following relations hold:
$$ \int_{\mathbb{R}^n} u_1(x) \varphi_{j,R}(x) dx > 0,\quad \int_{\mathbb{R}^n} v_1(x) \varphi_{j,R}(x) dx > 0 $$
and
$$ \mathcal{J}_{1,R} \leq \mathcal{J}_{2,R} \leq 2 \mathcal{J}_{1,R}, $$
for all $R \in \left[R_0, \infty\right)$ and $j \in \{1,2\}$. From the relation $\mathcal{I}_{1,R} \leq \mathcal{I}_{2,R}$, substituting the right-hand side of (\ref{impor-ine2})  into  the left-hand side of (\ref{impor-ine1}) we arrive at
  \begin{align}
      \mathcal{J}_{1,R} +\int_{\mathbb{R}^n} v_1(x) \varphi_{1,R}(x) dx &\lesssim \mathcal{J}_{2,R}^{\frac{1}{pq}} R^{-2\sigma-\frac{4\sigma}{p}+\frac{2n+2\sigma}{pq'}+\frac{n+2\sigma}{p'}} \notag\\
      &\lesssim \mathcal{J}_{1,R}^{\frac{1}{pq}} R^{-2\sigma-\frac{4\sigma}{p}+\frac{2n+2\sigma}{pq'}+\frac{n+2\sigma}{p'}}. \label{inequality1}
  \end{align}
Analogously, we substitute the right-hand side of (\ref{impor-ine1}) into the left-hand side of (\ref{impor-ine2}) to derive
  \begin{align}\label{inequality2}
      \mathcal{I}_{1,R}+ \int_{\mathbb{R}^n} u_1(x) \varphi_{2,R}(x) dx \lesssim \mathcal{I}_{1,R}^{\frac{1}{pq}} R^{-4\sigma-\frac{2\sigma}{q}+\frac{2n+2\sigma}{q'}+\frac{n+2\sigma}{p'q}}.
  \end{align}
Thus, it follows that
$$ \int_{\mathbb{R}^n} v_1(x) \varphi_{1,R}(x) dx \lesssim \mathcal{J}_{1,R}^{\frac{1}{pq}} R^{-2\sigma-\frac{4\sigma}{p}+\frac{2n+2\sigma}{pq'}+\frac{n+2\sigma}{p'}} - \mathcal{J}_{1,R} $$
and
$$ \int_{\mathbb{R}^n} u_1(x) \varphi_{2,R}(x) dx \lesssim \mathcal{I}_{1,R}^{\frac{1}{pq}} R^{-4\sigma-\frac{2\sigma}{q}+\frac{2n+2\sigma}{q'}+\frac{n+2\sigma}{p'q}} - \mathcal{I}_{1,R}, $$
  for all $R \in \left[R_0, \infty\right)$. From these observations, the application of the elementary inequality
  \begin{align*}
    Ay^{\beta} - y \lesssim A^{\frac{1}{1-\beta}} \text{ for any } A > 0, y \geq 0 \text{ and } 0 < \beta < 1,
\end{align*}
gives
  \begin{align}\label{ini1}
     \int_{\mathbb{R}^n} v_1(x) \varphi_{1,R}(x) dx \lesssim R^{\frac{pq}{pq-1}(-2\sigma-\frac{4\sigma}{p}+\frac{2n+2\sigma}{pq'}+\frac{n+2\sigma}{p'})},
  \end{align}
  and
  \begin{align}\label{ini2}
      \int_{\mathbb{R}^n} u_1(x) \varphi_{2,R}(x) dx \lesssim R^{\frac{pq}{pq-1}( -4\sigma-\frac{2\sigma}{q}+\frac{2n+2\sigma}{q'}+\frac{n+2\sigma}{p'q})}.
  \end{align}
Obviously, if the conditions (\ref{condition1.3.3}) or $n \leq \sigma $ occurs, then it implies that $\Gamma_{\rm c}(p,q) > 0$, where
\begin{equation} \label{CT1}\tag{$*$}
 \Gamma_{\rm c}(p,q):=\frac{pq}{pq-1}\max\left\{2\sigma+\frac{4\sigma}{p}-\frac{2n+2\sigma}{pq'}-\frac{n+2\sigma}{p'}, 4\sigma+\frac{2\sigma}{q}-\frac{2n+2\sigma}{q'}-\frac{n+2\sigma}{p'q}\right\}.
\end{equation}
Then, we take $R\to\infty$ in either the inequality (\ref{ini1}) or the inequality (\ref{ini2}) to establish a contradiction to (\ref{condition1.3.1}).
Summarizing, the proof of Theorem \ref{Theorem3} is completed.

\section{Final conclusions and open problems} \label{Sect.Final}
In this section, we will summarize how to get some estimates for lifespan of solutions in the subcritical case
\begin{equation}\label{Crit.Condition}
    \max\left\{\frac{2q+1}{pq+q-2},\frac{pq+p+1}{2pq-p-1}\right\} > \frac{n}{2\sigma}.
\end{equation}
Let us denote by $T_{\varepsilon}$ the so-called lifespan of a local (in time)
solution $(u,v)$, i.e the maximal existence time of local solutions. Moreover, we replace the initial data $(u_1,v_1)$ by $(\varepsilon u_1, \varepsilon v_1)$, in which $\varepsilon$ stands for a small, positive constant to describe the size of the initial data, and define the quantity $\Gamma_{\rm c}(p,q)$ as in (\ref{CT1}). Then, lower bound estimates and upper bound estimates for $T_{\varepsilon}$ are given by the next statements.

\subsection{Upper bound of lifespan} \label{Sectionupperbound}
\begin{proposition}\label{proposition4.1}
Under the same assumptions as in Theorem \ref{Theorem3} together with the condition \eqref{Crit.Condition} for the exponents $p$ and $q$, there exists a positive constant $\varepsilon_0>0$ such that for any $\varepsilon \in (0,\varepsilon_0]$ the lifespan of solutions to \eqref{Main.Eq.1} fulfill the following upper bound estimates:
    \begin{equation} \label{Upper_Lifespan}
    T_{\varepsilon} \leq
C\varepsilon^{-\frac{2\sigma}{\Gamma_{\rm c}(p,q)}},
    \end{equation}
    where $C$ is a suitable positive constant.
\end{proposition}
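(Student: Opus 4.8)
The plan is to re-run the rescaled test-function scheme of the proof of Theorem \ref{Theorem3}, this time keeping track both of the small parameter $\varepsilon$ carried by the rescaled data $(\varepsilon u_1,\varepsilon v_1)$ and of the finiteness of the existence interval $[0,T_\varepsilon)$. The only structural modification is that the temporal cut-off $\eta_R(t)=\eta(R^{-2\sigma}t)$ is supported in $[0,R^{2\sigma}]$, so in order that $\Psi_{j,R}$ remain admissible test functions on $[0,T_\varepsilon)$ in the sense of Definition \ref{Definition4.1}, I must restrict the scaling parameter to $R^{2\sigma}\le T_\varepsilon$. For such $R$ the entire chain of H\"older estimates, changes of variables and applications of the inequality $Ay^{\beta}-y\lesssim A^{1/(1-\beta)}$ that produced \eqref{ini1} and \eqref{ini2} (with $K_1=K_2=1$) carries over verbatim; the sole difference is that the data terms now appear weighted by $\varepsilon$, so that
\begin{align*}
\varepsilon\int_{\R^n} v_1(x)\varphi_{1,R}(x)\,dx &\lesssim R^{\frac{pq}{pq-1}\left(-2\sigma-\frac{4\sigma}{p}+\frac{2n+2\sigma}{pq'}+\frac{n+2\sigma}{p'}\right)},\\
\varepsilon\int_{\R^n} u_1(x)\varphi_{2,R}(x)\,dx &\lesssim R^{\frac{pq}{pq-1}\left(-4\sigma-\frac{2\sigma}{q}+\frac{2n+2\sigma}{q'}+\frac{n+2\sigma}{p'q}\right)}.
\end{align*}

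I would then observe that, by the very definition \eqref{CT1} of $\Gamma_{\rm c}(p,q)$, the decay exponent in (at least) one of the two estimates above equals $-\Gamma_{\rm c}(p,q)$; I keep that estimate. The subcriticality assumption \eqref{Crit.Condition} guarantees $\Gamma_{\rm c}(p,q)>0$, so this exponent is genuinely negative. At this point the sign conditions \eqref{condition1.3.1} come into play: by dominated convergence $\int_{\R^n} v_1\varphi_{1,R}\,dx\to\int_{\R^n} v_1\,dx>0$ and $\int_{\R^n} u_1\varphi_{2,R}\,dx\to\int_{\R^n} u_1\,dx>0$ as $R\to\infty$, hence there is a fixed $R_0\ge 1$ such that the relevant data integral is bounded below by a positive constant for every $R\ge R_0$. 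Combining this lower bound with the retained estimate yields
$$ \varepsilon\lesssim R^{-\Gamma_{\rm c}(p,q)}\qquad\text{for every }R\ge R_0\text{ with }R^{2\sigma}\le T_\varepsilon. $$

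Finally, I would optimise the free parameter. If $T_\varepsilon<R_0^{2\sigma}$, then \eqref{Upper_Lifespan} holds trivially as soon as $\varepsilon_0$ is fixed so small that $C\varepsilon^{-2\sigma/\Gamma_{\rm c}(p,q)}\ge R_0^{2\sigma}$ for all $\varepsilon\le\varepsilon_0$, which is possible precisely because $\Gamma_{\rm c}(p,q)>0$. In the remaining case $T_\varepsilon\ge R_0^{2\sigma}$ the admissible window for $R$ is non-empty, and letting $R\uparrow T_\varepsilon^{1/(2\sigma)}$ in the last display gives $\varepsilon\lesssim T_\varepsilon^{-\Gamma_{\rm c}(p,q)/(2\sigma)}$, which rearranges precisely into \eqref{Upper_Lifespan}. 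The core estimates being identical to those already established for \eqref{ini1}--\eqref{ini2}, I expect the only delicate points to be bookkeeping ones: tracking the factor $\varepsilon$ through the argument, verifying that the window $\{R:R_0\le R,\ R^{2\sigma}\le T_\varepsilon\}$ is non-empty in the non-trivial case, and justifying the limit $R\uparrow T_\varepsilon^{1/(2\sigma)}$ at the right endpoint of the support of $\eta_R$, which is handled by taking $R$ slightly below $T_\varepsilon^{1/(2\sigma)}$ and absorbing the resulting constant.
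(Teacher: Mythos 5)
Your proposal is correct and follows essentially the same route as the paper's proof: a lower bound on the data integrals $\int_{\R^n} u_1\varphi_{2,R}\,dx$, $\int_{\R^n} v_1\varphi_{1,R}\,dx$ by a positive constant for large $R$ (from \eqref{condition1.3.1} and dominated convergence), combined with the upper bounds \eqref{ini1}--\eqref{ini2} scaled by $\varepsilon$, followed by the choice $R^{2\sigma}\sim T_\varepsilon$ and rearrangement using $\Gamma_{\rm c}(p,q)>0$. Your write-up is in fact more careful than the paper's terse argument, making explicit the admissibility restriction $R^{2\sigma}\le T_\varepsilon$, the trivial case $T_\varepsilon<R_0^{2\sigma}$, and the endpoint limit $R\uparrow T_\varepsilon^{1/(2\sigma)}$, all of which the paper leaves implicit.
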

    \begin{proof}
     Let $R_1$ be a sufficiently large constant. Then the following relation holds:
    \begin{align*}
         \min\left\{\int_{\mathbb{R}^n} u_1(x)\varphi_{2,R}(x)dx,  \int_{\mathbb{R}^n} v_1(x)\varphi_{1,R}(x) dx \right\} > C_1 \text{ for all } R \geq R_1,
    \end{align*}
    where
    \begin{align*}
        C_1 := \frac{1}{2} \min\left\{\int_{\mathbb{R}^n}u_1(x)dx, \int_{\mathbb{R}^n} v_1(x) dx \right\}.
    \end{align*}
    Combining this with the relations (\ref{ini1}) and (\ref{ini2}) we arrive at
    \begin{align*}
        C_1 \varepsilon \leq C_2 R^{-\Gamma_{\rm c}(p,q)} = T_{\varepsilon}^{-\frac{\Gamma_{\rm c}(p,q)}{2\sigma}}, \text{ that is, } T_{\varepsilon} \leq C_2 C_1^{-1} \varepsilon^{-\frac{2\sigma}{\Gamma_{\rm c}(p,q)}} = C \varepsilon^{-\frac{2\sigma}{\Gamma_{\rm c}(p,q)}} .
    \end{align*}
    Summarizing, the proof of Proposition \ref{proposition4.1} is completed.
    \end{proof}

\subsection{Lower bound of lifespan} \label{Sectionlowerbound}
\begin{proposition}\label{proposition4.2}
    Let $1 < \sigma < n < 2\sigma$ and $(u_1,v_1) \in \mathcal{D}$. We assume that the exponents $p,q \ge 2$ satisfy the condition \eqref{Crit.Condition}. Then, there exists $\varepsilon_0 > 0$ such that for any $\varepsilon \in (0, \varepsilon_0]$, the lower bound for the lifespan of solutions to \eqref{Main.Eq.1} can be estimated as follows:
    \begin{equation} \label{Lower_Lifespan}
        T_{\varepsilon} \geq c \varepsilon^{-\frac{2\sigma}{\Gamma_{\rm c}(p,q)}},
    \end{equation}
    where $c$ is a suitable positive constant.
\end{proposition}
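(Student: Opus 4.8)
The plan is to mirror the contraction-mapping scheme behind Theorem \ref{Theorem1}, but to run it on a finite time interval $[0,T]$ and to keep track of the explicit powers of $(1+T)$ that now arise because condition \eqref{Crit.Condition} places us in the subcritical regime. Concretely, I would work in the same solution space $X(T)$ and with the same operator $\mathcal{N}$ as in Section \ref{subsection2.2}, except that I set the loss-of-decay exponents $[\varepsilon(p)]^+$ and $[\varepsilon(q)]^+$ in the weights $f_i$ and $g_i$ equal to zero, so that these weights carry only the genuine linear decay rates supplied by Propositions \ref{Proposition2.1} and \ref{Proposition2.2}. Local (in time) existence of a solution $(u,v)$ in $X(T)$ for small $T$ is standard and follows from the very same fixed-point argument; the issue here is quantifying how far in time this solution persists.

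\textbf{Step 1 (a priori estimate with explicit time growth).} I would repeat the nonlinear estimates from the proof of Theorem \ref{Theorem1}, invoking the fractional Gagliardo--Nirenberg inequality (Proposition \ref{FractionalG-N}) and the linear estimates exactly as before, but now observing that in the subcritical range the decisive relation \eqref{re1} and its $q$-counterpart fail: the integrands $(1+\tau)^{-\frac{n}{2\sigma}(p-1)+p\varepsilon(q)}$ (and the analogue for $u$) are no longer integrable, so the time integrals grow polynomially rather than converge. Carrying these growth factors along yields, in place of \eqref{pt4.3}, an estimate of the form
\begin{align*}
\|(u,v)\|_{X(T)} \lesssim \varepsilon \|(u_1,v_1)\|_{\mathcal{D}} + (1+T)^{\mu_p}\|(u,v)\|^p_{X(T)} + (1+T)^{\mu_q}\|(u,v)\|^q_{X(T)},
\end{align*}
together with a matching Lipschitz estimate in place of \eqref{pt4.4}, where $\mu_p,\mu_q>0$ are the exact growth exponents produced by the now-positive quantities $[\varepsilon(p)]^+$ and $[\varepsilon(q)]^+$.

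\textbf{Step 2 (continuation/bootstrap).} Writing $M(T):=\|(u,v)\|_{X(T)}$, which is continuous and non-decreasing in $T$ with $M(0)\lesssim\varepsilon$, I would run a standard continuity argument: the a priori estimate shows that the bound $M(T)\le 2C_0\varepsilon$ is self-improving as long as
\begin{align*}
(1+T)^{\mu_p}\varepsilon^{p-1}\le \delta \quad\text{and}\quad (1+T)^{\mu_q}\varepsilon^{q-1}\le \delta
\end{align*}
for a suitably small $\delta>0$. Hence the solution cannot break down before
\begin{align*}
T_\varepsilon \gtrsim \min\Big\{\varepsilon^{-\frac{p-1}{\mu_p}},\ \varepsilon^{-\frac{q-1}{\mu_q}}\Big\}.
\end{align*}

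The main obstacle --- and the step that makes the result sharp --- is to compute $\mu_p$ and $\mu_q$ explicitly from the definitions of $\varepsilon(p)$ and $\varepsilon(q)$, and then to verify the arithmetic identity
\begin{align*}
\min\Big\{\tfrac{p-1}{\mu_p},\ \tfrac{q-1}{\mu_q}\Big\}=\frac{2\sigma}{\Gamma_c(p,q)},
\end{align*}
with $\Gamma_c(p,q)$ as in \eqref{CT1}, so that the lower bound \eqref{Lower_Lifespan} exactly matches the upper bound of Proposition \ref{proposition4.1}. This forces a case distinction according to whether $p\lessgtr p_{\rm crit}$ and $q\lessgtr q_{\rm crit}$ --- precisely the three cases treated in the proof of Theorem \ref{Theorem1} --- and it is here that the hypotheses $1<\sigma<n<2\sigma$ and $p,q\ge 2$ enter, ensuring that Propositions \ref{Proposition2.1}, \ref{Proposition2.2} and \ref{FractionalG-N} apply verbatim in both nonlinear directions.
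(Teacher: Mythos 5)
Your framework (finite-time contraction, explicit tracking of the polynomial growth that appears once \eqref{re1} fails, then a continuation argument) is indeed the skeleton of the paper's proof, but the step you defer to the end --- the ``arithmetic identity'' --- is exactly where the proposal breaks down, because that identity is \emph{false} for the weights you chose. Set $A:=1-\frac{n}{2\sigma}(p-1)$ and $B:=1+q-\frac{n}{\sigma}(q-1)$, so that $\e(p)=A+\e$ and $\e(q)=B+\e$. With all loss-of-decay exponents set to zero, the two Duhamel estimates produce the growth rates $\mu_p=[A]^+$ and $\mu_q=[B]^+$, so your bootstrap can only give
\begin{align*}
T_\e \gtrsim \min\left\{\e^{-\frac{p-1}{[A]^+}},\ \e^{-\frac{q-1}{[B]^+}}\right\},
\end{align*}
whereas the paper's proof realizes the target exponent $\frac{2\sigma}{\Gamma_c(p,q)}$ as $\frac{pq-1}{A+pB}$. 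A direct computation shows that $\frac{p-1}{A}\ge\frac{pq-1}{A+pB}$ if and only if $(p-1)B\ge(q-1)A$, while $\frac{q-1}{B}\ge\frac{pq-1}{A+pB}$ if and only if $(q-1)A\ge(p-1)B$; consequently
\begin{align*}
\min\left\{\frac{p-1}{[A]^+},\ \frac{q-1}{[B]^+}\right\}\ \le\ \frac{pq-1}{A+pB},
\end{align*}
with equality only on the exceptional set $(q-1)A=(p-1)B$. Concretely, in the case $p\ge p_{\rm crit}$, $q<q_{\rm crit}$ (i.e. $A\le 0<B$), your bound degenerates to $\e^{-\frac{q-1}{B}}$, which is strictly weaker than $\e^{-\frac{pq-1}{A+pB}}$ whenever $A<0$. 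So no computation of $\mu_p,\mu_q$ can close Step 2 as written: the deficiency lies in the choice of weights, not in the bookkeeping.

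What the paper does, and what is missing from your scheme, is a \emph{balanced redistribution of the loss of decay between the two components}. The paper works with separate norms $\|u\|_{Y_1(T)}$ and $\|v\|_{Y_2(T)}$ (not the single combined norm of Section \ref{subsection2.2}), and shifts the weights of one component by $(1+t)^{\alpha}$ with the specific choice
\begin{align*}
\alpha:=-\frac{q-1}{pq-1}\,A+\frac{p-1}{pq-1}\,B,
\end{align*}
the sign of $\alpha$ dictating which component absorbs the loss (this replaces your case distinction). With this choice the two growth exponents become $A+p\alpha=\frac{p-1}{pq-1}(A+pB)$ and $B-\alpha=\frac{q-1}{pq-1}(A+pB)$, both proportional to the \emph{same} quantity $K:=A+pB$. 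Hence the smallness conditions $(1+T)^{A+p\alpha}\e^{p-1}\lesssim 1$, $(1+T)^{B-\alpha}\e^{q-1}\lesssim 1$, and the composite condition $(1+T)^{K}\e^{pq-1}\lesssim 1$ --- which arises from feeding the $u$-estimate into the $v$-equation, is recorded explicitly in the paper's fixed-point condition \eqref{con1}, and is the one that saturates first --- all fail at the same time scale $T\sim\e^{-\frac{pq-1}{K}}=\e^{-\frac{2\sigma}{\Gamma_c(p,q)}}$. Your single-norm scheme, with separate $p$-th and $q$-th power terms, never produces this composite $\e^{pq-1}$ term, and without the balancing parameter $\alpha$ the thresholds cannot be made to coincide; adding this balancing device is essential for the proposal to yield \eqref{Lower_Lifespan}.
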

\begin{proof}
    For $T > 0$, we introduce the following function spaces:
    \begin{align*}
        Y_1(T) := \mathcal{C}([0,T], H^{\sigma} \cap L^{q} \cap L^{\infty}),\quad Y_2(T) := \mathcal{C}([0,T], H^{\sigma}) \text{ and } Y_0(T) := Y_1(T) \times Y_2(T),
    \end{align*}
    with the corresponding norms
    \begin{align*}
        \|u\|_{Y_1(T)} &:= \sup_{0 \leq t \leq T}\left(\bar{f_1}(t)^{-1}\|u(t,\cdot)\|_{L^q} + \bar{f_2}(t)^{-1}\|u(t,\cdot)\|_{L^\infty} + \bar{f_3}(t)^{-1} \||D|^{\sigma}u(t,\cdot)\|_{L^2}\right),\\
        \|v\|_{Y_2(T)}&:= \sup_{0 \leq t \leq T} \left(\bar{g}_1(t)^{-1}\|v(t,\cdot)\|_{L^2} + \bar{g}_2(t)^{-1} \||D|^{\sigma} v(t,\cdot)\|_{L^2}\right)
    \end{align*}
    and
    \begin{align*}
        \|(u,v)\|_{Y_0(T)} := \|u\|_{Y_1(T)} +\|v\|_{Y_2(T)}.
    \end{align*}
    In addition, with $k=0,1,2$ we define operators $\mathcal{N}_k$ on the spaces $Y_k(T)$ as follows:
    \begin{align*}
        \mathcal{N}_1[u] &:= u^{\rm lin} + \int_0^t \mathcal{K}_1(t-\tau,\cdot)*_x |v(\tau,x)|^p d\tau = u^{\rm lin} + u^{\rm non},\\
        \mathcal{N}_2[v] &:= v^{\rm lin} + \int_0^t \mathcal{K}_2(t-\tau,\cdot)*_x |u(\tau,x)|^q d\tau = v^{\rm lin} + v^{\rm non}
    \end{align*}
    and
    \begin{align*}
        \mathcal{N}_0[u,v] := (\mathcal{N}_1[u], \, \mathcal{N}_2[v]).
    \end{align*}
    Let $M$ be a suitable positive constant. With $k = 0,1,2$ we propose the following subspaces:
    \begin{align*}
        \mathbb{Y}_k(T, M) := \left\{\mathcal{G} \in Y_k(T) \text{ such that } \|\mathcal{G}\|_{Y_k(T)} \leq M \right\}.
    \end{align*}
    Our consideration can be separated into some cases as follows:
\begin{itemize}[leftmargin=*]
\item \textbf{Case 1:} Let us assume that $q < q_{\rm crit}$ and $p \geq p_{\rm crit}$, which imply $$\displaystyle\frac{pq+p+1}{2pq-p-1} = \displaystyle \max\left\{\frac{2q+1}{pq +q-2},  \frac{pq+p+1}{2pq-p-1} \right\}.$$ We choose the weights in the norms of the solution spaces as follows:
    \begin{align*}
        \bar{f_1}(t) := (1+t)^{-\frac{n}{\sigma}(1-\frac{1}{q})+1} , \quad \bar{f_2}(t):= (1+t)^{-\frac{n}{\sigma}+1}, \quad \bar{f_3}(t):= (1+t)^{-\frac{n}{4\sigma}}
    \end{align*}
    and
    \begin{align*}
        \bar{g}_1(t) := (1+t)^{-\frac{n}{4\sigma}+ \alpha}, \quad \bar{g}_2(t) := (1+t)^{-\frac{n}{4\sigma}-\frac{1}{2}+\alpha},
    \end{align*}
    where
    \begin{align*}
        \alpha := -\frac{q-1}{pq-1}\left(1-\frac{n}{2\sigma}(p-1)\right)+\frac{p-1}{pq-1}\left(1+q-\frac{n}{\sigma}(q-1)\right).
    \end{align*}
    Now we will construct a unique local (in time) small data solution $(u,v) \in \mathbb{Y}_0(T, M\varepsilon)$ to (\ref{Main.Eq.1}). Indeed, applying the fractional Gagliardo-Nirenberg inequality from Proposition \ref{FractionalG-N} with the conditions $\min\{p,q\} \geq 2$ and $\sigma < n < 2\sigma$, then carrying out some of the steps of the proof of Theorem \ref{Theorem1}, we derive
    \begin{align*}
      \|\mathcal{N}_2[v]\|_{Y_2(T)} &\leq C_1  \varepsilon \|(u_1,v_1)\|_{\mathcal{D}} + C_1 (1+T)^{1+q-\frac{n}{\sigma}(q-1)-\alpha} \|u\|_{Y_1(T)}^q\\
      &\leq C_1  \varepsilon \|(u_1,v_1)\|_{\mathcal{D}} + C_1 (1+T)^{1+q-\frac{n}{\sigma}(q-1)-\alpha} M^q\varepsilon^q,\\
      \|\mathcal{N}_1[u]\|_{Y_1(T)} &\leq C_2 \varepsilon \|(u_1,v_1)\|_{\mathcal{D}}+C_2(1+T)^{1-\frac{n}{2\sigma}(p-1)+p\alpha} \|v\|_{Y_2(T)}^p\\
      &\leq C_2 \varepsilon \|(u_1,v_1)\|_{\mathcal{D}} +C_2(1+T)^{1-\frac{n}{2\sigma}(p-1)+p\alpha} M^p\varepsilon^p,
    \end{align*}
and
    \begin{align*}
       & \|\mathcal{N}_0[u,v] -\mathcal{N}_0[\bar{u}, \bar{v}] \|_{Y_0(T)}\notag\\
       &\quad\leq C_3\|(u,v)-(\bar{u}, \bar{v})\|_{Y_0(T)}\\
       &\qquad \times \left((1+T)^{1-\frac{n}{2\sigma}(p-1)+p\alpha} \left(\|v\|_{Y_2(T)}^{p-1} + \|\bar{v}\|_{Y_2(T)}^{p-1} \right)+ (1+T)^{1+q-\frac{n}{\sigma}(q-1)-\alpha}\left(\|u\|_{Y_1(T)}^{q-1} + \|\bar{u}\|_{Y_1(T)}^{q-1}\right)\right)\notag\\
       &\quad \leq C_3 \|(u,v)-(\bar{u}, \bar{v})\|_{Y_0(T)} \left((1+T)^{1-\frac{n}{2\sigma}(p-1)+p\alpha}M^{p-1}\varepsilon^{p-1} + (1+T)^{1+q-\frac{n}{\sigma}(q-1)-\alpha}M^{q-1}\varepsilon^{q-1}\right),
    \end{align*}
    for any $(u,v),(\bar{u}, \bar{v}) \in \mathbb{Y}_0(T, M\varepsilon)$. Here, we note that
    \begin{align}
         1-\frac{n}{2\sigma}(p-1)+p\left(1+q-\frac{n}{\sigma}(q-1)\right) &> 0,\notag\\
         1+q-\frac{n}{\sigma}(q-1)-\alpha = \frac{q-1}{pq-1}\left(1-\frac{n}{2\sigma}(p-1)+p\left(1+q-\frac{n}{\sigma}(q-1)\right)\right) &> 0,\label{BT1}\\
         1-\frac{n}{2\sigma}(p-1)+p\alpha = \frac{p-1}{pq-1}\left(1-\frac{n}{2\sigma}(p-1)+p\left(1+q-\frac{n}{\sigma}(q-1)\right)\right) &> 0.\label{BT2}
    \end{align}
    Therefore, as long as $$\max\{C_1, C_2\}\varepsilon \|(u_1,v_1)\|_{\mathcal{D}} < \frac{M\varepsilon}{6},$$ and
    \begin{align}
        &\max\{C_1, C_2, C_3\} \max\Big\{(1+T)^{1-\frac{n}{2\sigma}(p-1)+p\alpha} M^p\varepsilon^{p}, (1+T)^{1+q-\frac{n}{\sigma}(q-1)-\alpha}M^q\varepsilon^{q}, \notag\\
        &\hspace{6cm} (1+T)^{1-\frac{n}{2\sigma}(p-1)+p(1+q-\frac{n}{\sigma}(q-1))} M^{pq}\varepsilon^{pq}\Big\} < \frac{M\varepsilon}{6}, \label{con1}
    \end{align}
    applying Banach's fixed point theorem we may construct a unique local solution $(u,v) \in \mathbb{Y}_0(T, M\varepsilon)$.
    Consequently, it entails the following estimates:
     \begin{align*}
      \|v\|_{Y_2(T)}
      &\leq C_1  \varepsilon \|(u_1,v_1)\|_{\mathcal{D}} + C_1 (1+T)^{1+q-\frac{n}{\sigma}(q-1)-\alpha} M^q\varepsilon^q,\\
      \|u\|_{Y_1(T)}
      &\leq C_2 \varepsilon \|(u_1,v_1)\|_{\mathcal{D}} +C_2(1+T)^{1-\frac{n}{2\sigma}(p-1)+p\alpha} \varepsilon^p  \\
      &\hspace{5cm} + C_2(1+T)^{1-\frac{n}{2\sigma}(p-1)+p(1+q-\frac{n}{\sigma}(q-1))}  M^{pq}\varepsilon^{pq}.
    \end{align*}
    Now we choose
    \begin{align*}
        T^* := \sup\left\{t \in (0, T_{\varepsilon} )\text{ such that } \mathcal{H}(t) := \|(u,v)\|_{Y_0(t)} \leq M\varepsilon\right\}.
    \end{align*}
    From  the condition (\ref{con1}), we see that $\mathcal{H}(T^*) < M\varepsilon/2$.  Due to the fact that $\mathcal{H}=\mathcal{H}(t)$ is a continuous and increasing function for any $t \in (0, T_{\varepsilon})$, we may claim that there exists a time $T^0 \in (T^*, T_{\varepsilon})$ such that $\mathcal{H}(T^0) \leq M\varepsilon$. This contradicts to the definition of $T^*$. For this reason, combining the relations (\ref{BT1}) and (\ref{BT2}) one realizes
    \begin{align*}
        C_3(1+ T^*)^{1-\frac{n}{2\sigma}(p-1)+p(1+q-\frac{n}{\sigma}(q-1))}M^{pq}\varepsilon^{pq} \geq \frac{M\varepsilon}{6},
    \end{align*}
    that is,
    \begin{align*}
        T_{\varepsilon} \geq T^* \geq c \varepsilon^{-\frac{pq-1}{1-\frac{n}{2\sigma}(p-1)+p(1+q-\frac{n}{\sigma}(q-1))}} = c \varepsilon^{-\frac{2\sigma}{\Gamma_{\rm c}(p,q)}}.
    \end{align*}
    Therefore, in this case the proof of Proposition \ref{proposition4.2}  is established.
\item \textbf{Case 2:} Next, let us consider $q \geq q_{\rm crit}$ and $p < p_{\rm crit}$. In this case, we choose the weights in the norms of the solution spaces as follows:
 $$ \bar{f_1}(t) := (1+t)^{-\frac{n}{\sigma}(1-\frac{1}{q})+1 -\alpha}, \quad \bar{f_2}(t):= (1+t)^{-\frac{n}{\sigma}+1 -\alpha}, $$
$$ \bar{f_3}(t):= (1+t)^{-\frac{n}{4\sigma} -\alpha} $$
    and
$$ \bar{g}_1(t) := (1+t)^{-\frac{n}{4\sigma}}, \quad \bar{g}_2(t) := (1+t)^{-\frac{n}{4\sigma}-\frac{1}{2}}. $$
    Then, following some steps as above we can conclude Proposition \ref{proposition4.2} in this case, too.
\item \textbf{Case 3:} Finally, let us consider the case $p < p_{\rm crit}$ and $q < q_{\rm crit}$. Then, we choose the weights in the norms of the solution spaces either as the case $q < q_{\rm crit}, \,p \geq p_{\rm crit}$ if $\alpha > 0$ or as the case $q \geq q_{\rm crit}, p < p_{\rm crit}$ if $\alpha \leq 0$.
    Then, carrying out some steps as above we may arrive at Proposition \ref{proposition4.2} in this case, too.
\end{itemize}
Summarizing, the proof of Proposition \ref{proposition4.2} is completed.
\end{proof}

\begin{remark}
\fontshape{n}
\selectfont
Linking the achieved estimates \eqref{Upper_Lifespan} and \eqref{Lower_Lifespan} in Propositions \ref{proposition4.1} and \ref{proposition4.2} one recognizes that the sharp lifespan estimates for solutions to the Cauchy problem \eqref{Main.Eq.1} in the subcritical case, i.e. the condition \eqref{Crit.Condition} occurs, are determined by the following relation:
$$ T_{\varepsilon} \sim \varepsilon^{-\frac{2\sigma}{\Gamma_{\rm c}(p,q)}}. $$
For this observation, it is really a challenging problem to verify whether or not a blow-up result still holds in the critical case, namely,
$$ \max\left\{\frac{2q+1}{pq+q-2},\frac{pq+p+1}{2pq-p-1}\right\} = \frac{n}{2\sigma}, $$
and how to catch the optimality of estimates for lifespan when the critical case occurs.
\end{remark}

\subsection{Final comments} \label{Finalcomments}
Finally, we give some comments to the following model:
\begin{equation} \label{Main.Eq.2}
\begin{cases}
u_{tt} -\Delta u -\Delta u_t = |v|^p, & x\in \R^n,\, t> 0, \\
v_{tt} -\Delta v + v_t = |u|^q, & x \in \mathbb{R}^n,\, t> 0,\\
u(0,x)= 0,\quad u_t(0,x)= u_1(x), & x\in \R^n, \\
v(0,x) = 0,\, \quad v_t(0,x) = v_1(x), & x \in \mathbb{R}^n,
\end{cases}
\end{equation}
that is, (\ref{Main.Eq.1}) with $\sigma=1$. Then, we have a visco-elastic damping term in the first equation and a friction term in the second equation. As mentioned in Remark \ref{remark2.2} the obtained estimates in Proposition \ref{Proposition2.1} are no longer valid in the case $\sigma=1$. At this point, let us refer the following estimates instead:
\begin{proposition}[see Theorem 6 in \cite{DabbiccoReissig2014}]\label{Proposition2.3}
    Let $n \geq 2$. The solution to \eqref{Problem1} with $\sigma=1$ satisfies the $(L^1 \cap L^2) - L^2$ estimates
\begin{align*}
    \|u(t,\cdot)\|_{L^{2}} &\lesssim
    \begin{cases}
        (1+t)^{-\frac{n}{4}+\frac{1}{2}} \|u_1\|_{L^1 \cap L^2} &\text{ if } n \geq 3,\\
        \log(e+t) \|u_1\|_{L^1 \cap L^2} &\text{ if } n=2,
    \end{cases} \\
    \|(\nabla u, u_t)(t,\cdot)\|_{L^2} &\lesssim (1+t)^{-\frac{n}{4}} \|u_1\|_{L^1 \cap L^2},\\
    \|\nabla^2 u(t,\cdot)\|_{L^2} &\lesssim (1+t)^{-\frac{n}{4}-\frac{1}{2}} \|u_1\|_{L^1 \cap L^2},
\end{align*}
and the $L^2 - L^2$ estimates
\begin{align*}
    \|(\nabla u, u_t)(t,\cdot)\|_{L^2} &\lesssim \|u_1\|_{L^2},\\
    \|\nabla^2 u(t,\cdot)\|_{L^2} &\lesssim (1+t)^{-\frac{1}{2}} \|u_1\|_{L^2}.
\end{align*}
\end{proposition}
By the aid of Propositions \ref{Proposition2.2} and \ref{Proposition2.3} and after repeating the proof of Theorem \ref{Theorem1}, we may conclude he following result:
\begin{proposition}
    Let $n \geq 3$. Assume that the exponents $p, q$ satisfy $2 \leq q < \infty $ if $n = 3,4$ or $2 \leq q \leq n/(n-4)$ if $n > 4$, and $2 \leq p \leq n/(n-2)$. Moreover, we also  assume the condition
    \begin{align}
        \max\left\{\frac{3q/2 +1}{pq-1}, \,\frac{pq/2 +p+1}{pq-1}\right\} < \frac{n}{2}. \label{condition-remark4.2}
    \end{align}
    Then, there exists a constant $\varepsilon_0 > 0$ such that for any small data
$ (u_1,v_1) \in \mathcal{D} $
fulfilling the assumption $ \|(u_1,v_1)\|_{\mathcal{D}} < \varepsilon_0$,
we have a uniquely determined global (in time) small data Sobolev solution
\begin{align*}
    (u,v) \in \left(\mathcal{C}([0,\infty), H^2) \cap C^1([0,\infty), L^2)\right) \times \left(\mathcal{C}([0,\infty), H^{1}) \cap C^1([0,\infty), L^2)\right)
\end{align*}
to \eqref{Main.Eq.2}.
\end{proposition}
If we replace $\sigma = 1$ in the assumption (\ref{condition1.1.1}) of Theorem \ref{Theorem1} formally, then it is easy to see that
\begin{align*}
    \frac{3q/2 + 1}{pq -1} > \frac{2q+1}{pq+q-2} \,\text{ and } \,\frac{pq/2 +p+1}{pq-1} > \frac{pq+p+1}{2pq-p-1}
\end{align*}
under the condition $2 \leq p \leq n/(n-2) \leq 3$. Therefore, we can say that the assumption (\ref{condition1.1.1}) in the case $\sigma = 1$ is really stronger than the assumption (\ref{condition-remark4.2}). From this observation linked to the fact that Theorem \ref{Theorem3} still holds in the case $\sigma=1$, there exists a gap between the following curves:
$$ \max\left\{\frac{2q+1}{pq +q-2}, \frac{pq+p+1}{2pq-p-1} \right\} -\frac{n}{2}= 0, $$
and
$$ \max\left\{\frac{3q/2 +1}{pq-1}, \,\frac{pq/2 +p+1}{pq-1}\right\} - \frac{n}{2} = 0. $$
The appearance of this gap is reasonable since it still leaves as an open problem so far to verify the critical exponent for the following Cauchy problem:
$$ \begin{cases}
u_{tt} -\Delta u -\Delta u_t = |u|^p, & x\in \R^n,\, t> 0, \\
u(0,x)= u_0(x),\quad u_t(0,x)= u_1(x), & x\in \R^n.
\end{cases} $$
We expect that once the critical exponent for the previous Cauchy problem can be determined, it gives a good chance to demonstrate the critical curve for \eqref{Main.Eq.2}.

\section*{Acknowledgments}
This research was partly supported by Vietnam Ministry of Education and Training under grant number B2023-BKA-06.


\end{document}